\newtheorem{theorem}{Theorem}[section]
\newtheorem{proposition}[theorem]{Proposition}
\newtheorem{corollary}[theorem]{Corollary}
\newtheorem{lemma}[theorem]{Lemma}
\newtheorem{preremark}[theorem]{Remark}
\newtheorem{predefinition}[theorem]{Definition}
\newtheorem{preexample}[theorem]{Example}
\newtheorem{prenotation}[theorem]{Notation}
\newtheorem{preconjecture}[theorem]{Conjecture}
\newenvironment{remark}{\begin{preremark}\rm}{\end{preremark}}
\newenvironment{definition}{\begin{predefinition}\rm}
{\end{predefinition}}
\newenvironment{example}{\begin{preexample}\rm}{\end{preexample}}
\def\OO{{\mathcal{O}}}
\def\QQ{\mathbb{Q}}
\newcommand{\M}{{\mathfrak{M}}}
\let\epsilon=\varepsilon
\def\phi{{\varphi}}
\let\Psi=\varPsi
\let\Phi=\varPhi
\let\theta=\vartheta
\def\LT{\mathop{\rm LT}\nolimits}
\def\LC{\mathop{\rm LC}\nolimits}
\def\NF{\mathop{\rm NF}\nolimits}
\def\Mat{\mathop{\rm Mat}\nolimits}
\def\Supp{\mathop{\rm Supp}\nolimits}
\def\Spec{\mathop{\rm Spec}\nolimits}
\def\GFan{\mathop{\rm GFan}\nolimits}
\def\DLog{\mathop{\rm DLog}\nolimits}
\def\Mat{\mathop{\rm Mat}\nolimits}
\def\Sep{\mathop{\rm Sep}\nolimits}
\newcommand{\Lin}{\mathop{\rm Lin}\nolimits}
\let\To=\longrightarrow
\def\TTo#1{\mathop{\longrightarrow}\limits ^{#1}}
\def\tr{^{\,\rm tr}}
\def\tfrac #1#2{{\textstyle\frac{#1}{#2}}}
\def\cocoa{\mbox{\rm
  C\kern-.13em o\kern-.07 em C\kern-.13em o\kern-.15em A}}
\def\apcocoa{\mbox{\rm
A\kern-0.13em p\kern -0.07em C\kern-.13em o\kern-.07 em C\kern-.13em
o\kern-.15em A}}
\date{\today}
\begin{document}

\title{Restricted Gr\"obner Fans and Re-embeddings of Affine Algebras}

\author{Martin Kreuzer}
\address{Fakult\"at f\"ur Informatik und Mathematik, Universit\"at
Passau, D-94030 Passau, Germany}
\email{Martin.Kreuzer@uni-passau.de}

\author{Le Ngoc Long}
\address{Fakult\"at f\"ur Informatik und Mathematik, Universit\"at
Passau, D-94030 Passau, Germany and Department of Mathematics, 
University of Education -- Hue University, 34 Le Loi Street, Hue City, Vietnam}
\email{lelong@hueuni.edu.vn, ngoc-long.le@uni-passau.de}

\author{Lorenzo Robbiano}
\address{Dipartimento di Matematica, Universit\`a di Genova,
Via Dodecaneso 35,
I-16146 Genova, Italy}
\email{lorobbiano@gmail.com}

\begin{abstract}
In this paper we continue the study of good re-embeddings of affine
$K$-algebras started in~\cite{KLR2}. The idea is to use 
special linear projections to find isomorphisms
between a given affine $K$-algebra $K[X]/I$, where $X=(x_1,\dots,x_n)$,
and $K$-algebras having fewer generators.
These projections are induced by particular tuples of indeterminates~$Z$ 
and by term orderings~$\sigma$ which realize~$Z$ as 
leading terms of a tuple~$F$ of polynomials in~$I$. In order to efficiently find such
tuples, we provide two major new tools: an algorithm which reduces the
check whether a given tuple~$F$ is $Z$-separating to an LP
feasibility problem, and an isomorphism between the part of the Gr\"obner fan
of~$I$ consisting of marked reduced Gr\"obner bases which contain a
$Z$-separating tuple and the Gr\"obner fan of $I\cap K[X\setminus Z]$.
We also indicate a possible generalization to tuples~$Z$ which consist
of terms. All results are illustrated by explicit examples. 
\end{abstract}

\keywords{affine algebra, Gr\"obner fan, embedding dimension}

\subjclass[2010]{Primary 13P10; Secondary  14Q20, 13E15, 14R10}

\maketitle

%
%

\section{Introduction}
\label{sec:Intro}

This paper is a natural continuation of~\cite{KLR2}. The topic treated here and there
is the search for good re-embeddings of affine algebras over a field~$K$, 
or equivalently, of affine schemes. What do we mean by that? It is a classical research topic
in algebraic geometry to find embeddings of a given scheme into low-dimensional spaces.
For affine varieties, the main result of~\cite{Sri} has been generalized in 
several directions (see for instance~\cite{SY} and~\cite{RS}, Section~10.2).
Here we follow a more computational approach which tries to avoid the frequently costly
calculation of Gr\"obner bases as much as possible.

Let $K$ be a field and~$I$ an ideal in a polynomial ring $P=K[x_1, \dots, x_n]$.
We are looking for a 
polynomial ring $P'=K[y_1,\dots, y_m]$ and an ideal~$I'$ in~$P'$ such 
that $m<n$ and such that there is a $K$-algebra isomorphism $P/I \cong P'/I'$.
In other words, we are looking for a smaller number of $K$-algebra generators of $P/I$.
In general, this problem is very hard, but there are chances to get 
good solutions using the following type of linear projections. 

Assume that $I \subseteq \langle x_1,\dots,x_n\rangle$.
Let $Z=(z_1,\dots,z_s)$ be a set of distinct indeterminates in $X=(x_1,\dots,x_n)$,
and suppose that there exist a term ordering~$\sigma$ and polynomials
$f_1,\dots,f_s \in I$ with $\LT_\sigma(f_i) = z_i$ for $i=1,\dots,s$
and such that this is the only appearance of~$z_i$ in a term of one
of the polynomials $f_1,\dots,f_s$. Then we say that~$I$ is $Z$-separating
and~$\sigma$ is a $Z$-separating term ordering for~$I$. In this setting,
the reduced $\sigma$-Gr\"obner basis of~$I$ allows us to define
a $K$-algebra isomorphism $P/I \cong \widehat{P} / (I\cap \widehat{P})$,
where $\widehat{P} = K[X \setminus Z]$, called a $Z$-separating re-embedding of~$I$.
Notice that the search for such $f_1, \dots, f_s \in I$ is in general non-trivial.
In particular, they can be hidden and far away from a given set of generators of~$I$
(see for instance~\cite{Cr} and~\cite{KLR2}, Example 3.7).

As explained in~\cite{KLR2}, the discovery of $Z$-separating 
re-embeddings for non-trivial ideals~$I$ relies on the study of the Gr\"obner 
fan $\GFan(I)$ which was introduced first in~\cite{MR}. 
As mentioned above, it may
not be possible to use a $Z$-separating re-embedding to get an optimal
re-embedding $P/I \cong P'/I'$ in the sense that~$\dim(P')$ is the embedding
dimension of~$P/I$, i.e., the smallest possible number. Moreover,
the usage of Gr\"obner fans has the added disadvantage that their
computation is prohibitively expensive in all but the smallest examples.

This brings us to the main topic of this paper, namely the task to 
improve the search for $Z$-separating re-embeddings and to use a smaller portion 
of the Gr\"obner fan which is easier to compute. To achieve this goal, we take
a markedly different point of view than in~\cite{KLR2}. Here we concentrate
not on specific $Z$-separating tuples, but on finding suitable tuples~$Z$
and the corresponding $Z$-separating tuples for a given ideal~$I$.
As a consequence, we have to study the intimate relationships between
the given ideal~$I$, possible tuples~$Z$, and possible $Z$-separating
tuples of polynomials $F=(f_1,\dots,f_s)$ with $f_i\in I$ very carefully,
and in fact the entire Section~\ref{sec:Z-Separating Ideals}
is devoted to this clarification task. In particular, Proposition~\ref{prop:Z-sep}
explains the relationship between $Z$-separating tuples and elimination orderings for~$Z$.

Another novelty relates to the search for suitable $Z$-separating tuples. 
Suppose we have a reasonable candidate for $Z=(z_1,\dots,z_s)$ 
and for a $Z$-separating tuple $F=(f_1,\dots,f_s)$ of polynomials in~$I$. 
How can we check if~$F$ is indeed a $Z$-separating tuple for~$I$?
Recall that this means that we need to find a term ordering~$\sigma$ such
that $\LT_\sigma(f_i)=z_i$ for $i=1,\dots,s$.
In Section~\ref{sec:Finding Z-Separating Tuples} we show how to convert this problem 
to a Linear Programming (LP) feasibility problem (see Proposition~\ref{prop:charZ-sep}).
The usage of LP solvers for this task is explained in detail in Corollary~\ref{cor:checkZ-sep}. 
Several examples illustrate the efficiency and power of this approach.

In Section~\ref{sec:Z-Separating Re-Embeddings}, we characterize $Z$-separating term 
orderings for~$I$ by the shape of their reduced Gr\"obner basis 
(see Proposition~\ref{prop:charZ-sepTO}). This Gr\"obner basis allows us then to construct
the desired re-embeddings of~$I$ which we called $Z$-separating
re-embeddings in~\cite{KLR2}. Altogether, we arrive at an efficient strategy for
finding good re-embeddings of~$I$ which does not require
the (possibly expensive) pre-calculation of a reduced Gr\"obner basis of~$I$:
\begin{enumerate}
\item[(1)] Find a (large) tuple of indeterminates $Z=(z_1,\dots,z_s)$ 
and a tuple of polynomials $F=(f_1,\dots,f_s)$ such that $f_i$ is
$z_i$-separating for $i=1,\dots,s$.

\item[(2)] Using an LP solver, verify that~$F$ is $Z$-separating
and obtain a $Z$-separating term ordering for~$I$.

\item[(3)] With the help of some inexpensive interreduction steps,
create  polynomials $z_1 -h_1,\dots,z_s -h_s$
in~$I$ with $h_i \in \widehat{P}= K[X\setminus Z]$.

\item[(4)] Using the polynomials $h_1,\dots,h_s$, define the $Z$-separating re-embedding
$\Phi_Z:\; P/I \longrightarrow \widehat{P} / (I \cap \widehat{P})$
of~$I$.
\end{enumerate}
The viability and efficiency of this strategy is then demonstrated
using some concrete examples.

The theoretical main result of this paper is contained in 
Section~\ref{sec:Z-Restricted Groebner Fans}. Recall that the Gr\"obner fan
$\GFan(I)$ of~$I$ consists of all marked reduced Gr\"obner bases of~$I$.
In non-trivial cases, it tends to be huge and very demanding to compute.
In Definition~\ref{def:Z-GFan} we introduce the notion 
of the $Z$-restricted Gr\"obner fan of~$I$,
denoted by $\GFan_Z(I)$, which is the set of all marked reduced Gr\"obner bases 
containing a $Z$-separating tuple of polynomials. 
Then, in Theorem~\ref{thm:restrGFan}, we prove that there is a bijective map 
$\Gamma_Z:\; \GFan_Z(I)\longrightarrow \GFan(I\cap \widehat{P})$.
Since $\widehat{P} = K[X \setminus Z]$ may have considerably fewer indeterminates 
than~$P$, this turns out to be a nice tool which can
simplify the search for a good, and possibly optimal, re-embeddings of~$I$.
Some examples illustrate this phenomenon. 

Section~\ref{sec:Computing} provides several heuristics and approaches 
for actually finding good re-embeddings of~$I$, and it explains how
to overcome some difficulties that may arise. 
In particular, Example~\ref{ex:singular} deals with the problem that
the hypothesis $I\subseteq \langle x_1,\dots,x_n\rangle$, which we were
using throughout the paper, may not be satisfied. Therefore we may need 
to perform a linear change of coordinates such that the origin is contained 
in $\mathcal{Z}(I)$. Which point should we move to the origin?
By~\cite{KLR2}, Theorem~4.1, the dimension of  the cotangent space
at the origin is a lower bound for the embedding dimension of~$P/I$.
Therefore we should move the {\it worst} singularity of~$P/I$
to the origin. Since there is a unique $K$-rational singular point
in  this example, we know what we have to do, but the general situation
may be more challenging. 
On the positive side, at the end of this section we also
provide a criterion which allows us to show that some re-embeddings are
actually isomorphisms between the given scheme and an affine space 
(see Proposition~\ref{prop:AffineSpace}).

Finally, in Section~\ref{sec:T-RestrGFan} we generalize the approach 
from using $Z$-separating tuples of polynomials to $T$-separating tuples,
where $T=(t_1,\dots,t_s)$ denotes a tuple of terms that we try to
realize as leading terms of polynomials in~$I$. To adapt the definitions
to this more general setting, we let~$Z$ be the tuple of indeterminates
dividing one of the terms in~$T$ and $Y=X\setminus Z$.
Using a suitable definition of a $T$-separating
Gr\"obner fan $\GFan_T(I)$, we show that there is a free module~$M$ 
over~$K[Y]$ such that the elements of $\GFan_T(I)$ are related to
$K[Y]$-module Gr\"obner bases of $I\cap M$ and such that
a $T$-separating module re-embedding $P/I \cong M/(I\cap M)$ of~$I$ results.
However, we were not able to find an analogue of Theorem~\ref{thm:restrGFan}
in this setting and leave this task for future research.

True to our preferred style, we have sprinkled this paper generously
with many illustrative examples. The calculations underlying these examples 
were performed using the computer algebra system \cocoa\ (see~\cite{CoCoA}) 
and with the help  of the several \cocoa -packages written by the second 
and third authors. For the notation and definitions used throughout the paper, 
we follow~\cite{KR1} and~\cite{KR2}.

\bigbreak
%
%

\section{$Z$-Separating Polynomials, Tuples, and Ideals}
\label{sec:Z-Separating Ideals}

In this section we use the notation introduced in~\cite{KLR2} with some 
appropriate changes and extensions. Specifically, we let $K$ be a field, 
let $P=K[x_1, \dots, x_n]$, and let $\M = \langle x_1, \dots, x_n\rangle$. 
The tuple formed by the indeterminates of~$P$ is denoted by $X = (x_1,\dots, x_n)$. 
Furthermore, we let $1\le s\le n$, let $z_1, \dots, z_s$ be pairwise 
distinct indeterminates in~$X$, and let $Z =(z_1, \dots, z_s)$.
The remaining indeterminates are denoted by 
$\{y_1,\dots,y_{n-s}\} = \{x_1,\dots,x_n\} \setminus \{z_1,\dots,z_s\}$,
and we let $Y =(y_1,\dots, y_{n-s})$.
Finally, given a term ordering $\sigma$ on~$P$,
its restriction to $K[Y]=K[y_1, \dots, y_{n-s}]$ is denoted by~$\sigma_Y$.

The following definition extends~\cite{KLR2}, Definitions~2.1 and~2.5.

\begin{definition}\label{def:Z-sep}
In the above setting, let $f_1,\dots,f_s \in\mathfrak{M} \setminus \{0\}$,
let $F=(f_1, \dots, f_s)$, and let $I_F = \langle f_1,\dots,f_s\rangle$
be the ideal generated by $\{f_1,\dots,f_s\}$.
\begin{enumerate}
\item[(a)] Given $i\in \{1,\dots,s\}$, we say that the polynomial~$f_i$ is
{\bf $z_i$-separating} if $z_i \in \Supp(f_i)$ and~$z_i$ does not divide
any other term in $\Supp(f_i)$.

\item[(b)] The tuple~$F$ is called {\bf $Z$-separating} if there exists 
a term ordering~$\sigma$ such that $\LT_\sigma(f_i) = z_i$ for $i=1,\dots,s$.
In this case~$\sigma$ is called a {\bf $Z$-separating term ordering} for~$F$.

\item[(c)] The tuple~$F$ is called {\bf coherently $Z$-separating} if it is
$Z$-separating, i.e., there exists a term ordering~$\sigma$ such that 
$\LT_\sigma(f_i)=z_i$ for $i=1,\dots,s$,
and if, additionally, the reduced $\sigma$-Gr\"obner basis of~$I_F$ is 
$\{\frac{1}{c_1}\, f_1,\dots, \frac{1}{c_s}\, f_s \}$, 
where $c_i = \LC_\sigma(f_i)$ for $i=1,\dots,s$.

\item[(d)] The ideal $I_F$ is called {\bf $Z$-separating} if there exists a 
term ordering~$\sigma$ such that $\LT_\sigma(I_F) = \langle Z\rangle$.
In this case~$\sigma$ is called a {\bf $Z$-separating term ordering} for~$I_F$.

\item[(e)] The set of all $Z$-separating term orderings for~$I_F$ is
denoted by $\Sep_Z(I_F)$.
\end{enumerate}
\end{definition}

Let us collect some basic observations about these notions.

\begin{proposition}\label{prop:Z-sep}
In the above setting, let $f_1,\dots,f_s \in\mathfrak{M} \setminus \{0\}$,
let $F=(f_1, \dots, f_s)$, and let $I_F = \langle f_1,\dots,f_s\rangle$.
\begin{enumerate}
\item[(a)] If~$F$ is a $Z$-separating tuple then $f_i$ is $z_i$-separating
for $i=1,\dots,s$.

\item[(b)] The tuple~$F$ is coherently $Z$-separating if and only
if $f_i$ is $z_i$-separating for $i=1,\dots,s$ and~$z_i$ does not divide any term 
in~$\Supp(f_j)$ for $i,j\in \{1,\dots,s\}$ such that $j\ne i$.

\item[(c)] If the ideal $I_F$ is $Z$-separating and~$\sigma$ is a
$Z$-separating term ordering for~$I_F$ then the reduced $\sigma$-Gr\"obner
basis of~$I_F$ is coherently $Z$-separating.

\item[(d)] If~$F$ is a $Z$-separating tuple and $\sigma$ is a $Z$-separating
term ordering for~$F$ then the ideal $I_F$ is $Z$-separating and~$\sigma$
is a $Z$-separating term ordering for~$I_F$. Moreover, the reduced $\sigma$-Gr\"obner
basis of~$I_F$ is obtained by making the polynomials in~$F$ monic and interreducing them.

\item[(e)] If the ideal~$I_F$ is~$Z$-separating and~$\sigma$ is an elimination 
ordering for~$Z$ then $\sigma\in \Sep_Z(I_F)$.

\end{enumerate}
\end{proposition}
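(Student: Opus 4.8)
The plan is to prove the five parts in the order (a), (b), (c), (d), (e), resting everything on one elementary observation that will be used repeatedly: a finite set of polynomials whose leading terms are pairwise coprime is automatically a Gr\"obner basis (Buchberger's product criterion), and here all leading terms in sight are distinct indeterminates among $z_1,\dots,z_s$, hence pairwise coprime. For part~(a), I would assume $\sigma$ is a $Z$-separating term ordering for~$F$, so that $\LT_\sigma(f_i)=z_i$. Then $z_i\in\Supp(f_i)$ is immediate, and if some $t\in\Supp(f_i)$ with $t\ne z_i$ were divisible by $z_i$, writing $t=z_i\,t'$ with $t'\ne 1$ would give $t=z_i\,t'>_\sigma z_i=\LT_\sigma(f_i)$, a contradiction; hence $f_i$ is $z_i$-separating.

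For part~(b), the forward implication is quick: coherent $Z$-separation implies $Z$-separation, so $f_i$ is $z_i$-separating by~(a), and the extra support condition holds because $\{\tfrac1{c_1}f_1,\dots,\tfrac1{c_s}f_s\}$ being the reduced $\sigma$-Gr\"obner basis of $I_F$ forbids any term of $\Supp(f_j)$ from being divisible by the leading term $z_i$ of a different basis element. For the converse, the crucial step is to observe that, under the hypotheses, every term of $\Supp(f_i)$ other than $z_i$ is divisible by no $z_k$ whatsoever, hence lies in $K[Y]$; thus $f_i=c_i z_i+g_i$ with $g_i\in K[Y]$ and $c_i\in K\setminus\{0\}$. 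Choosing $\sigma$ to be any elimination ordering for~$Z$ then forces $\LT_\sigma(f_i)=z_i$ (the term $z_i$ exceeds every term of $g_i$), so $F$ is $Z$-separating; moreover the product criterion shows $\{f_1,\dots,f_s\}$ is already a $\sigma$-Gr\"obner basis of $I_F$, it is a minimal one because the $z_i$ pairwise fail to divide each other, and it is tail-reduced because each $g_i$ lies in $K[Y]$. Hence the reduced $\sigma$-Gr\"obner basis of $I_F$ is $\{\tfrac1{c_1}f_1,\dots,\tfrac1{c_s}f_s\}$, i.e. $F$ is coherently $Z$-separating.

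For part~(c), if $\LT_\sigma(I_F)=\langle Z\rangle$, then the leading terms of the reduced $\sigma$-Gr\"obner basis $G$ are exactly the minimal monomial generators of $\langle Z\rangle$, namely $z_1,\dots,z_s$; after reindexing, $G=(g_1,\dots,g_s)$ with $\LT_\sigma(g_i)=z_i$ and each $g_i$ monic. Then $G$ is $Z$-separating via~$\sigma$, and its own reduced $\sigma$-Gr\"obner basis is $G$, which is monic, so $G$ is coherently $Z$-separating straight from the definition. Part~(d) is the Gr\"obner-basis core of the converse of~(b) but without the elimination ordering: if $\sigma$ is any $Z$-separating term ordering for~$F$, then $z_1,\dots,z_s$ are pairwise coprime, so the product criterion gives that $\{f_1,\dots,f_s\}$ is a $\sigma$-Gr\"obner basis of $I_F$, whence $\LT_\sigma(I_F)=\langle z_1,\dots,z_s\rangle=\langle Z\rangle$ and $\sigma\in\Sep_Z(I_F)$; the reduced Gr\"obner basis is then obtained by making the $f_i$ monic and interreducing, with no element discarded since the $z_i$ are minimal monomial generators. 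Finally, for part~(e): since $I_F$ is $Z$-separating, part~(c) shows that its reduced Gr\"obner basis for a $Z$-separating term ordering is a coherently $Z$-separating tuple $G=(g_1,\dots,g_s)$, and by~(b) each $g_i$ equals $z_i$ plus an element of $K[Y]$; if now $\sigma$ is an elimination ordering for~$Z$, then $\LT_\sigma(g_i)=z_i$, so $G$ is a $Z$-separating tuple admitting $\sigma$ as a $Z$-separating term ordering, and applying~(d) to~$G$ (and noting $I_G=I_F$) yields $\sigma\in\Sep_Z(I_F)$.

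The one step that needs genuine care is the converse of~(b) together with its twin, the Gr\"obner-basis computation reused in~(d) and~(e): one must notice that the two support conditions force each $f_i$ into the normal form $c_i z_i+g_i$ with $g_i\in K[Y]$, and then that distinct-indeterminate leading terms let Buchberger's product criterion take the place of an actual Gr\"obner basis computation. Once this is in place, the rest is bookkeeping about leading terms, minimality of monomial generating sets, tail-reducedness, and the definition of an elimination ordering.
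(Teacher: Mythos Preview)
Your proof is correct and follows essentially the same approach as the paper: part~(a) uses the same leading-term contradiction, parts~(c)--(e) rest on Buchberger's product criterion for pairwise coprime leading terms exactly as the paper does (citing \cite{KR1}, Corollary~2.5.10), and your route to~(e) via (c), then the normal form $z_i+g_i$ with $g_i\in K[Y]$, then the elimination ordering, matches the paper's argument. The only noteworthy difference is that the paper outsources~(b) to \cite{KLR2}, Proposition~2.6, whereas you supply a direct self-contained proof; your key observation there---that the two support hypotheses force every non-$z_i$ term of $f_i$ into $K[Y]$, after which any elimination ordering for~$Z$ does the job---is exactly the right idea and makes the argument pleasantly self-contained.
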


\begin{proof}
To prove~(a), we let $i\in \{1,\dots,s\}$. Then $z_i$ does not divide
any term in~$\Supp(f_i)$ besides itself, since in this case that term would
be larger than~$z_i$ w.r.t.~$\sigma$ in contradiction to $z_i = \LT_\sigma(f_i)$.

Claim~(b) was proven in~\cite{KLR2}, Proposition 2.6.
Next we show~(c). Using Definition~\ref{def:Z-sep}.d, we know that there
exists polynomials $g_1,\dots,g_s\in I_F$ such that $\LT_\sigma(g_i)=z_i$
for $i=1,\dots,s$. In particular, this implies that~$g_i$ is $z_i$-separated.
Therefore the reduced $\sigma$-Gr\"obner basis of~$I_F$ is necessarily of the 
form $\{z_1-h_1, \dots, z_s -h_s\}$ with $h_1, \dots, h_s \in K[Y]$.
Hence the claim follows from Definition~\ref{def:Z-sep}.c.

In order to prove~(d), we note that the leading terms $z_i = \LT_\sigma(f_i)$
are pairwise coprime. Hence~\cite{KR1}, Corollary 2.5.10,  
implies that~$F$ is a $\sigma$-Gr\"obner basis of~$I_F$ and $\LT_\sigma(I_F) 
= \langle \LT_\sigma(f_1),\dots, \LT_\sigma(f_s)\rangle = \langle Z \rangle$.
The additional claim follows from the observation that~$F$ is actually a
minimal $\sigma$-Gröbner basis of~$I_F$, i.e., that $\{ z_1,\dots,z_s\}$
is the minimal monomial system of generators of~$\LT_\sigma(I_F)$.

Finally, we show~(e). By~(d), there exists a term ordering~$\tau$ such that
the reduced $\tau$-Gr\"obner basis of~$I_F$ has the form $\{z_1-h_1,
\dots, z_s - h_s\}$ with $h_1,\dots,h_s \in K[Y]$.
Since~$\sigma$ is an elimination ordering for~$Z$, it follows that 
$\LT_\sigma(z_i-h_i)=z_i$ for $i=1,\dots,s$, and hence that this is the
reduced $\sigma$-Gr\"obner basis of~$I_F$, as well.
\end{proof}

The next examples illustrate the preceding definition and proposition.
In particular, they show that a tuple of $Z$-separating polynomials is not necessarily 
a $Z$-separating tuple, and that a tuple generating a $Z$-separating
ideal is in general not a $Z$-separating tuple.

\begin{example}
Let $P=\mathbb{Q}[x,y,z]$, let $Z=(x,y)$, and let $F=(f_1,f_2)$, where
$f_1 = x-y^2$ and $f_2 = y-xz$. Then $f_1$ is $x$-separated and $f_2$ is
$y$-separated, but~$F$ is not $Z$-separated. To see why this is so,
assume that~$\sigma$ is a term ordering such that $x >_\sigma y^2$ and $y >_\sigma xz$.
Then $x >_\sigma y^2 >_\sigma x^2 z^2$ yields a contradiction to $xz^2 >_\sigma 1$.
Hence the implication in part~(a) of the proposition is strict.
\end{example}

\begin{example}
Let $P=\mathbb{Q}[x,y,z]$, let $Z=(x,y)$, and let $F=(f_1,f_2)$, where
$f_1=x-z^2$ and $f_2=y-xz$. Then $I_F = \langle f_1,f_2\rangle$ is a
$Z$-separated ideal for any term ordering~$\sigma$ satisfying
$x>_\sigma z^2$ and $y >_\sigma xz$, since then we have $\LT_\sigma(f_1)=x$
and $\LT_\sigma(f_2)=y$. Such term ordering are easy to construct, for instance
by taking $\sigma = {\rm ord}(V)$ with a matrix $V\in \Mat_3(\mathbb{Z})$
whose first row is $(3, 5, 1)$.
Thus $\{f_1,f_2\}$ is a $\sigma$-Gr\"obner basis
of~$I_F$ and $\LT_\sigma(I_F) = \langle Z\rangle$.

Note that the tuple~$F$ is not coherently $Z$-separating, because $x=\LT_\sigma(f_1)$
divides a term in $\Supp(f_2)$. To create a coherently $Z$-separating tuple,
we have to calculate the reduced $\sigma$-Gr\"obner basis of~$I_F$.
Interreducing~$f_1$ and~$f_2$ yields $\{x-z^2,\, y-z^3\}$, a coherently $Z$-separating
tuple which generates~$I_F$.
\end{example}

\begin{example}\label{ex:SumAndDiff}
Let $P=\mathbb{Q}[x,y,z]$, let $Z=(x,y)$, and let $F=(f_1,f_2)$, where
$f_1 = x+y -z^2$ and $f_2 = x - y +z^2$. Here~$F$ is not a $Z$-separating
tuple, because $x = \LT_\sigma(f_1)$ implies $x>_\sigma y$ and $y=\LT_\sigma(f_2)$
implies $y >_\sigma x$, a contradiction.

However, the ideal~$I_F$ is $Z$-separating. Let us replace the generating
tuple~$F$ by $G = (f_1+f_2,\; f_1-f_2) = (2x,\; 2y - 2z^2)$. Then~$G$ is
a $\sigma$-Gr\"obner basis of~$I_F$ for any term ordering~$\sigma$ such that
$y >_\sigma z^2$. Notice that, in order to pass from the given tuple of
generators of~$I_F$ to a coherently separating one, we had to perform
linear combinations of the given generators.
\end{example}

Furthermore, let us reconsider  Example~2.8 of~\cite{KLR2}.

\begin{example}\label{ex:toyexample}
Let $P = \mathbb{Q}[x,y,z]$, let $Z = (y, z)$, and let $F=(f_1,f_2)$, where
$f_1 = x^2 -x -y$ and $f_2 = y^2 - z$.
Then the tuple~$F$ is not coherently $Z$-separating.
However, if we reduce~$f_2$ using~$f_1$, i.e., if we replace~$y^2$ in~$f_2$ 
with $(x^2 -x)^2$, we get $\tilde{f}_2 = (x^2 -x)^2-z$. 

Then $(f_1,\tilde{f}_2) = (y-x^2+x,\ z- (x^2-x)^2)$ is the reduced $\sigma$-Gr\"obner
basis of~$I_F$ for every elimination ordering~$\sigma$ for~$Z$, and therefore
every elimination ordering for~$Z$ is in $\Sep_Z(I_F)$.
\end{example}

The process of finding and verifying $Z$-separating tuples is, in general,
very time consuming. The following remark contains some suggestions
for speeding it up.

\begin{remark}\label{usingStronglySep}
Suppose we are given polynomials $f_1,\dots,f_s \in \M \setminus \{0\}$
which generate an ideal $I_F$, and we are looking for a $Z$-separating
generating tuple of~$I_F$. 
\begin{enumerate}
\item[(a)] The first thing to do is, of course, to check whether $F=(f_1,\dots,f_s)$ 
is a $Z$-separating tuple already. It is easy to inspect $f_1,\dots,f_s$ 
and see whether they are $Z$-separating polynomials. But for a $Z$-separating tuple, 
we also need a term ordering~$\sigma$ such that $\LT_\sigma(f_i) = z_i$
for $i=1,\dots,s$. In the next section we shall examine this problem
more closely and show that it amounts to the feasibility problem of a
Linear Programming Feasibility problem which may frequently be solved quickly.

\item[(b)] Suppose that our initial test says that~$F$ is not $Z$-separated
for any term ordering. This does not exclude the possibility that there exists 
another tuple which generates~$I_F$ and is $Z$-separating.
According to Proposition~\ref{prop:Z-sep}.c, to find such a tuple
we should, in principle, choose a candidate
term ordering~$\sigma$ and compute a $\sigma$-Gr\"obner basis of~$I_F$.
For larger examples, this may take a long time. However, if the given polynomials
yield a $\sigma$-Gr\"obner basis after some interreduction steps,
the Buchberger algorithm may terminate quickly enough. Hence we can try to
choose a promising candidate term ordering~$\sigma$ and start to calculate
the $\sigma$-Gr\"obner basis of~$I_F$ with a suitable temporal or spacial timeout.
If we are lucky and the process stops, we can usually move from the resulting
Gr\"obner basis to the reduced Gr\"obner basis with comparatively little
effort and end up with a coherently $Z$-separated tuple generating~$I_F$.

\end{enumerate}
\end{remark}

\bigskip\bigbreak
%
%

\section{Finding $Z$-Separating Tuples}
\label{sec:Finding Z-Separating Tuples}

In the setting of the preceding section, let $f_1,\dots,f_s \in \M\setminus \{0\}$,
let $F = (f_1,\dots,f_s)$, and let $I_F = \langle f_1,\dots,f_s\rangle$.
In the following we let $\mathbb{N}_+$ be the set of positive integers.
Recall that, for a term $t = x_1^{\alpha_1} \cdots x_n^{\alpha_n} \in\mathbb{T}^n$,
the number $\log(t) =(\alpha_1, \dots, \alpha_n)$ is called the {\bf logarithm} of~$t$.

\begin{definition}\label{def:DLog}
As above, let $f_1,\dots,f_s\in \M\setminus \{0\}$, let $F=(f_1, \dots, f_s)$,  
and assume that $z_i \in \Supp(f_i)$ for $i=1,\dots, s$. 
\begin{enumerate}
\item For $i=1,\dots,s$, the set 
$\{ \log(z_i)-\log(t) \mid t \in \Supp(f_i)\setminus \{z_i\}\}$ is 
called the {\bf set of logarithmic differences} of~$f_i$ with respect to~$z_i$ 
and is denoted by $\DLog_{z_i}(f_i)$.

\item The set $\bigcup_{i=1}^s \DLog_{z_i}(f_i)$ is called the 
{\bf set of logarithmic differences}
of~$F$ with respect to~$Z$ and is denoted by $\DLog_Z(F)$.
\end{enumerate}
\end{definition}

In the setting of Example~\ref{ex:toyexample}, we can evaluate these sets as follows.

\begin{example}\label{ex:toyexample-continued}
As in Example~\ref{ex:toyexample}, let $P=\mathbb{Q}[x,y,z]$, let $Z=(y,z)$, and let
$F = (f_1, f_2)$, where $f_1= x^2 -x -y$ and $f_2 = y^2 - z$.
Then we have $\DLog_y(f_1) = \DLog_y(x^2 -x ) = \{(-2, 1,0), (-1,1,0)\}$ and
$\DLog_z(f_2) = \DLog_z(y^2) = \{(0, -2, 1) \}$. Altogether, we obtain
$\DLog_Z(F) = \{(-2, 1,0), (-1,1,0), (0, -2, 1)\}$.
\end{example}

Recall that we may identify a term ordering on~$\mathbb{T}^n$ with
the corresponding term ordering on~$\mathbb{N}^n$ and its unique
extension to~$\mathbb{Z}^n$ as explained in~\cite{KR1}, p.\ 54.
Using this terminology, we can characterize $Z$-separating tuples as follows.

\begin{proposition}{\bf (Characterizing $Z$-Separating Tuples)}\label{prop:charZ-sep}\\
Let $f_1,\dots,f_s\in \M\setminus \{0\}$, let $F=(f_1, \dots, f_s)$, 
and assume that $z_i \in \Supp(f_i)$ for $i=1,\dots, s$. 
Then the following conditions are equivalent.
\begin{enumerate}
\item[(a)] The tuple $F$ is $Z$-separating.

\item[(b)] There exists a term ordering $\sigma$ on~$\mathbb{N}^n$ such that 
its unique extension to~$\mathbb{Z}^n$ satisfies $v>_\sigma 0$ 
for every $v \in \DLog_Z(F)$.

\item [(c)] There exists a vector $u \in \mathbb{N}_+^n$
such that $u\cdot v > 0$ for every $v \in \DLog_Z(F)$.
(Here $u\cdot v$ denotes the dot product of two elements of~$\mathbb{Z}^n$.)

\end{enumerate}
\end{proposition}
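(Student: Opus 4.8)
The plan is to prove the cycle of implications $(a)\Rightarrow(b)\Rightarrow(c)\Rightarrow(a)$, exploiting the standard correspondence between term orderings and the separating-hyperplane / linear-programming setup.

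\medskip\noindent\textbf{Proof plan.}
First I would show $(a)\Rightarrow(b)$. If $F$ is $Z$-separating, pick a $Z$-separating term ordering~$\sigma$, so $\LT_\sigma(f_i)=z_i$ for all~$i$. For any $t\in\Supp(f_i)\setminus\{z_i\}$ we have $z_i>_\sigma t$, hence $\log(z_i)-\log(t)>_\sigma 0$ under the extension of~$\sigma$ to~$\mathbb{Z}^n$. This is precisely the requirement that every element of $\DLog_Z(F)$ be $\sigma$-positive.

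Next, $(b)\Rightarrow(c)$. This is where the real content lies, and I expect it to be the main obstacle. The idea is that a term ordering on~$\mathbb{N}^n$ is, by Robbiano's classification (\cite{KR1}, Theorem 1.4.16 or thereabouts), representable by a real matrix whose rows define the ordering lexicographically; the finitely many vectors in $\DLog_Z(F)$ are made positive by~$\sigma$, so by a finite/compactness argument (the set $\DLog_Z(F)$ is finite) only finitely many rows of the representing matrix matter, and one can collapse them into a \emph{single} strictly positive rational — hence, after clearing denominators, integer — weight vector~$u$ with $u\cdot v>0$ for all $v\in\DLog_Z(F)$. The subtlety is ensuring $u$ can be chosen with \emph{strictly positive} entries (i.e.\ in $\mathbb{N}_+^n$), not merely nonnegative: one perturbs $u$ by adding a small positive multiple of a term ordering weight vector that is itself strictly positive on all of~$\mathbb{N}^n\setminus\{0\}$, keeping the strict inequalities on the finite set $\DLog_Z(F)$ intact. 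A clean way to package this is to invoke the finiteness of $\DLog_Z(F)$ together with the fact that the open cone $\{u\in\mathbb{R}^n: u\cdot v>0 \text{ for all } v\in\DLog_Z(F)\}$ is nonempty (by~(b)) and open, hence meets $\mathbb{Q}^n$ and, after adding a positive vector and rescaling, meets $\mathbb{N}_+^n$.

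Finally, $(c)\Rightarrow(a)$. Given $u\in\mathbb{N}_+^n$ with $u\cdot v>0$ for all $v\in\DLog_Z(F)$, choose any term ordering~$\tau$ and let $\sigma=\mathrm{ord}(W)$ where $W$ is an integer matrix whose first row is~$u$ and whose remaining rows represent~$\tau$ (as in \cite{KR1}, so that $\sigma$ is a genuine term ordering because $u$ has positive entries). For $t\in\Supp(f_i)\setminus\{z_i\}$, the first coordinate already separates: $u\cdot(\log(z_i)-\log(t))>0$ forces $z_i>_\sigma t$. Hence $\LT_\sigma(f_i)=z_i$ for all~$i$, so~$\sigma$ is a $Z$-separating term ordering for~$F$ and~$F$ is $Z$-separating. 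The only point to check carefully here is that prepending the row~$u$ to a matrix representing~$\tau$ yields a matrix that still defines a term ordering, which holds exactly because the entries of~$u$ are positive.
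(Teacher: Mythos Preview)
Your proposal is correct and follows essentially the same cycle $(a)\Rightarrow(b)\Rightarrow(c)\Rightarrow(a)$ as the paper, with identical arguments for $(a)\Rightarrow(b)$ and $(c)\Rightarrow(a)$. The only difference is that for $(b)\Rightarrow(c)$ the paper simply cites the Gr\"obner fan theory of Mora--Robbiano (\cite{MR}, Corollary~2.2), whereas you sketch the underlying argument (matrix representation of term orderings, finiteness of $\DLog_Z(F)$, perturbation to land in~$\NN_+^n$); your sketch is sound and is in effect what that reference provides.
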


\begin{proof}
First we prove that~(a) implies~(b). By the definition of a $Z$-separating set,
we know that there exists a term ordering~$\sigma$ on~$\mathbb{T}^n$ such 
that $z_i =\LT_\sigma(f_i)$ for $i=1,\dots, s$. This implies $\DLog_{z_i}(t)
>_\sigma 0$ for every $t\in \Supp(f_i) \setminus \{z_i\}$, and therefore
$v >_\sigma 0$ for every $v\in \DLog_Z(F)$.

Next we note that~(b)$\Rightarrow$(c) is shown in the theory of Gr\"obner fans
(see~\cite{MR}, Corollary 2.2).
Finally, to prove that~(c) implies~(a), we observe that the vector~$u$ 
can be taken as the first row of a matrix $V\in \Mat_n(\mathbb{Z})$ 
which defines a term ordering $\sigma$ on~$\mathbb{T}^n$.
Then we have $v >_\sigma 0$ for all $v\in \DLog_Z(F)$, and therefore
$z_i >_\sigma t$ for all $t\in \Supp(f_i) \setminus \{z_i\}$ and $i=1,\dots,s$.
Hence we conclude that $z_i =\LT_\sigma(f_i)$ for $i=1,\dots,s$.
\end{proof}

In the next step, we transform the problem of finding the vector~$u$
in part~(c) of the preceding proposition into a Linear Programming
Feasibility (LPF) problem over the field~$\mathbb{Q}$. In this way we will
get an algorithm which can detect efficiently whether a tuple~$F$ is $Z$-separating.

\begin{definition}
Let $A = (a_{ij}) \in\Mat_{k,n}(\mathbb{Q})$ and let $b=(b_1,\dots,b_k) \in
\mathbb{Q}^k$. Consider the following system of inequalities:
\begin{align*}
a_{i1} x_1 + \cdots + a_{in} x_n \le b_i & \qquad\hbox{\rm for\ } i=1,\dots,k\\
x_j \ge 0 & \qquad\hbox{\rm for\ } j=1,\dots,n
\end{align*}
Then the task of deciding whether this system of linear inequalities has a solution
$x = (x_1,\dots,x_n)\in \mathbb{Q}^n$ is called a 
{\bf Linear Programming Feasibility (LPF)} problem in standard form over the rationals.
It is usually written as $Ax \le b$ and $x\ge 0$.
\end{definition}

It is known that LPF instances can be decided in polynomial time, including
the computation of a solution instance in the positive case (see~\cite{Kar},
\cite{Kha}). The following corollary allows us to view the task of checking whether
the tuple~$F$ is $Z$-separating as an LPF instance.

\begin{corollary}\label{cor:checkZ-sep}
Given~$Z$ and~$F$ as above, we let $\DLog_Z(F) = \{v_1,\dots,v_k\}$
and write $v_i = (v_{i1}, \dots,v_{in})$ for $i=1,\dots,k$,
where $v_{ij}\in\mathbb{Z}$. Let $A=(a_{ij})$, where $a_{ij}=-v_{ij}$
and $b=(b_1,\dots,b_k)$, where $b_i = v_{i1} + \cdots +v_{in} -1$ for
$i=1,\dots,k$. Then the following conditions are equivalent:
\begin{enumerate}
\item[(a)] The tuple~$F$ is $Z$-separating.

\item[(b)] The LPF in standard form given by $Ax \le b$ and $x\ge 0$ has
a solution $x=(x_1,\dots,x_n)$ in~$\mathbb{Q}^n$.
\end{enumerate}
If these conditions are satisfied, we let $\tilde{u}_i = x_i+1$ for $i=1,\dots,n$,
multiply the numbers $\tilde{u}_i\in\mathbb{Q}_+$ with their common denominator 
to get $u_i\in \mathbb{N}_+$,
and obtain a vector $u=(u_1,\dots,u_n) \in \mathbb{N}_+^n$ such that
$u\cdot v > 0$ for all $v\in \DLog_Z(F)$.
\end{corollary}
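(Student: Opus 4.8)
The plan is to reduce Corollary~\ref{cor:checkZ-sep} to Proposition~\ref{prop:charZ-sep}, part~(c), by a direct change of variables. First I would recall what part~(c) of the proposition gives us: the tuple~$F$ is $Z$-separating if and only if there exists $u = (u_1,\dots,u_n) \in \mathbb{N}_+^n$ with $u \cdot v > 0$ for every $v \in \DLog_Z(F) = \{v_1,\dots,v_k\}$. The key observation is that requiring $u \in \mathbb{N}_+^n$ (strictly positive integer entries) can be relaxed: if there is \emph{any} $u \in \mathbb{Q}^n$ with all entries $\ge 1$ satisfying $u \cdot v_i > 0$ for all $i$, then clearing denominators produces an honest element of $\mathbb{N}_+^n$ with the same sign conditions, since scaling by a positive integer preserves both the inequalities $u \cdot v_i > 0$ and the positivity of the entries. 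Conversely a genuine $u \in \mathbb{N}_+^n$ already has all entries $\ge 1$. So part~(c) is equivalent to the existence of $u \in \mathbb{Q}^n$ with $u_j \ge 1$ for all~$j$ and $u \cdot v_i > 0$ for $i=1,\dots,k$.

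Next I would perform the substitution $u_j = x_j + 1$, so that $u_j \ge 1 \iff x_j \ge 0$, turning the constraint $u \in \mathbb{Q}^n$ with entries $\ge 1$ into $x \in \mathbb{Q}^n$ with $x \ge 0$. Under this substitution the inequality $u \cdot v_i > 0$ becomes $\sum_j (x_j+1) v_{ij} > 0$, i.e. $\sum_j v_{ij} x_j > -\sum_j v_{ij} = -(v_{i1} + \cdots + v_{in})$. The one subtlety is passing from a strict inequality ``$> -(v_{i1}+\cdots+v_{in})$'' to the non-strict form ``$\le b_i$'' with $b_i = v_{i1} + \cdots + v_{in} - 1$ used in the statement. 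Here I would argue that because all data $v_{ij}$ are integers, if $x$ is rational we may again clear denominators and scale: for an integer vector $u \in \mathbb{N}_+^n$, the quantity $u \cdot v_i$ is an integer, so $u \cdot v_i > 0$ is equivalent to $u \cdot v_i \ge 1$, equivalently $-u \cdot v_i \le -1$, equivalently $\sum_j (-v_{ij})(x_j+1) \le -1$, equivalently $\sum_j (-v_{ij}) x_j \le -1 + \sum_j v_{ij} = b_i$. With $a_{ij} = -v_{ij}$ this is exactly the $i$-th row of $Ax \le b$. Running this chain of equivalences in both directions establishes (a)$\iff$(b), with the caveat that in the direction (b)$\Rightarrow$(a) one first replaces a rational solution $x$ by an integer-scaled one before reading off $u$.

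For the final assertion about recovering~$u$: given a solution $x \in \mathbb{Q}^n$ of the LPF, set $\tilde u_i = x_i + 1 \in \mathbb{Q}$. Since $x_i \ge 0$ we get $\tilde u_i \ge 1 > 0$, so $\tilde u = (\tilde u_1,\dots,\tilde u_n) \in \mathbb{Q}_+^n$. Multiplying all the $\tilde u_i$ by their common denominator $d \in \mathbb{N}_+$ yields $u_i = d\,\tilde u_i \in \mathbb{N}_+$. The conditions $Ax \le b$ translate back, as above, to $-\tilde u \cdot v_i \le b_i = \sum_j v_{ij} - 1$, i.e. $\tilde u \cdot v_i \ge 1 - \sum_j v_{ij} + \sum_j v_{ij}$\,; re-tracing: $-\sum_j v_{ij}(x_j+1) \le \sum_j v_{ij} - 1$ gives $-\tilde u \cdot v_i \le \sum_j v_{ij} - 1$, hence $\tilde u \cdot v_i \ge 1 - \sum_j v_{ij} + \sum_j v_{ij} = 1 - 0$? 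I would be careful to simplify this correctly: starting from $\sum_j a_{ij} x_j \le b_i$ with $a_{ij} = -v_{ij}$ and adding $\sum_j a_{ij} = -\sum_j v_{ij}$ to both sides gives $\sum_j a_{ij} \tilde u_j \le b_i - \sum_j v_{ij} = -1$, so $-\tilde u \cdot v_i \le -1$, i.e. $\tilde u \cdot v_i \ge 1 > 0$, and therefore $u \cdot v_i = d\,(\tilde u \cdot v_i) \ge d > 0$ for all $i$. Thus $u \in \mathbb{N}_+^n$ satisfies $u \cdot v > 0$ for all $v \in \DLog_Z(F)$, as claimed, and Proposition~\ref{prop:charZ-sep}.c then guarantees~$F$ is $Z$-separating.

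I do not expect a genuine obstacle here; the proof is essentially bookkeeping. The only place demanding a little care is the strict-versus-nonstrict issue and the rational-versus-integral issue, both handled by the single remark that the $v_{ij}$ are integers, so a strict inequality on integer dot products is the same as ``$\ge 1$'', and that positive scaling is harmless. One should also note in passing that $\DLog_Z(F)$ is a finite set (so that the LPF has finitely many rows $k$), which follows from each $\Supp(f_i)$ being finite.
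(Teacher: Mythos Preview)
Your proposal is correct and follows essentially the same route as the paper: invoke Proposition~\ref{prop:charZ-sep}(c), replace the strict inequality $u\cdot v_i>0$ by $u\cdot v_i\ge 1$ using integrality, relax integer $u$ with entries $\ge 1$ to rational $u$ with entries $\ge 1$ via clearing denominators, and then substitute $x_j=u_j-1$ to reach the standard LPF form $Ax\le b$, $x\ge 0$. The only cosmetic difference is that the paper handles the strict-versus-nonstrict passage up front, whereas you thread it through the middle (with a small self-correcting detour); tightening that exposition would make the argument cleaner, but nothing is missing.
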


\begin{proof}
By the proposition, condition~(a) is equivalent to the existence of
integers $u_i\in\mathbb{N}_+$ for $i=1,\dots,n$ such that
$u_1 v_{i1} + \cdots + u_n v_{in} \ge 1$ for $i=1,\dots,k$.
Clearly, this is equivalent to the existence of rational numbers $u_i \ge 1$
satisfying these inequalities, as we can multiply those numbers by their 
common denominator and still get a solution.

Now we let $A=(a_{ij})$ with $a_{ij} = -v_{ij}$ and $b_i = v_{i1} + \cdots +
v_{in} - 1$ and $x_j = u_j -1$ for $i=1,\dots,k$ and $j=1,\dots,n$.
Then $u_1 v_{i1} + \cdots + u_n v_{in} \ge 1$ is equivalent to
$x_1 (-v_{i1}) + \cdots + x_n (-v_{in}) \le v_{i1} + \cdots + v_{in} - 1 = b_i$
for $i=1,\dots,k$. Hence the task is equivalent to finding $x=(x_1,\dots,x_n)\in  
\mathbb{Q}^n$ such that $Ax \le b$ and $x\ge 0$.

The additional claim follows immediately from these equivalences.
\end{proof}

Let us show some examples which illustrate this corollary.

\begin{example}\label{ex:smallex}
Let $P = \mathbb{Q}[x,y,z]$, let $Z=(y,z)$,
and let $F=(f_1,f_2)$, where
$f_1=x^2-x-y$ and $f_2=y^2 +x^8 -z$.
To check whether~$F$ is $Z$-separating, we form $\DLog_Z(F) = 
\{ (-1,1,0),\; (-2,1,0),\; (-8,0,1),\; (0,-2,1) \}$ and obtain the
LPF problem $Av\le b$ and $v\ge 0$ where
$$
A \;=\; \begin{pmatrix}
1 & -1 & 0\\  2 & -1 & 0\\ 8 & 0 & -1\\ 0 & 2 & -1
\end{pmatrix}\quad\hbox{\rm and}\quad
b \;=\; \begin{pmatrix}
-1\\ -2\\ -8\\ -2 
\end{pmatrix}
$$
Using an LP solver, we find that $v=(0,2,8)$ satisfies these inequalities. 
Therefore $u=(1,3,9)$ is a vector which shows that~$F$ is $Z$-separating,
and every term ordering $\sigma = {\rm ord}(V)$ for which the first
row of $V\in\Mat_3(\mathbb{Z})$ is $(1,3,9)$ will be a $Z$-separating term ordering
for~$F$.

Let us also check whether the tuple $F'=(f_1,f_2,f_3)$ is $Z'$-separating,
where $f_3= yz - x - y - z$ and $Z'=(y,z,x)$. For this purpose, we calculate
$\DLog_{Z'}(F') = \DLog_Z(F) \cup \{ (1,-1,0),\; (1,0,-1),\; (1,-1,-1)\}$
and have to solve $A'v\le b'$ and $v\ge 0$ for
$$
A' \;=\; \begin{pmatrix}
1 & -1 & 0\\  2 & -1 & 0\\ 8 & 0 & -1\\ 0 & 2 & -1\\ -1 & 1 & 0\\ -1 & 0 &1\\ -1&1&1
\end{pmatrix}\quad\hbox{\rm and}\quad b\;=\; \begin{pmatrix}
-1\\ -2\\ -8\\ -2\\ -1\\ -1\\ -2
\end{pmatrix}
$$
Here the LP solver tells us that the problem is infeasible, whence $F'$ is not 
a $Z'$-separating tuple.
\end{example}

The next example is bigger and shows that changing~$Z$ for a fixed tuple~$F$ can
lead to different outcomes.

\begin{example}\label{ex:sixteen}
Let $P = \mathbb{Q}[x_1, x_2, \dots, x_{16}]$, and let 
$F = (f_1, f_2, f_3, f_4, f_5, f_6)$, where
\begin{alignat*}{3}
f_1 &= x_{6} x_{9} +x_{11} x_{14} -x_{12}         
    &\qquad f_2 &= x_{6} x_{9} +x_{8} x_{13} -x_{7} \\
f_3 &= x_7 x_{13} + x_{12} x_{14} +x_{10} -x_{16} 
    &\qquad f_4 &= x_7 x_{14} +x_{13} x_{16} +x_5 -x_{15}\\
f_5 &= x_{13} x_{14} +x_{12} x_{15} +x_{1}        
    &\qquad f_6 &= x_{2} x_{3} -x_{4} x_{5} -x_{10} +x_{11}
\end{alignat*}
The linear parts of these polynomials are 
$$
-x_{12},\;  -x_7,\;  x_{10} -x_{16},\; x_{5} -x_{15},\; x_{1},\hbox{\ \rm and\ }-x_{10}+x_{11}
$$
We see that each polynomial in~$F$ is separated with respect to every indeterminate 
in its linear part. The question is to find out if the polynomials in~$F$ are coherently 
$Z$-separated, but of course there are many different choices for~$Z$. 

For instance, let us choose  $Z = (x_{12}, x_7, x_{16}, x_{15}, x_1, x_{10})$
and apply the algorithm described in Corollary~\ref{cor:checkZ-sep}.
First we calculate $\DLog_{Z}(F)$ and the corresponding matrix $A$, and get 
$$
A= \begin{pmatrix}
  0&  0&  0&  0&  0&  1&  0&  0&  1&  0&  0&  -1&  0&  0&  0&  0 \\
  0&  0&  0&  0&  0&  0&  0&  0&  0&  0&  1&  -1&  0&  1&  0&  0  \\
  0&  0&  0&  0&  0&  1&  -1&  0&  1&  0&  0&  0&  0&  0&  0&  0  \\
  0&  0&  0&  0&  0&  0&  -1&  1&  0&  0&  0&  0&  1&  0&  0&  0  \\
  0&  0&  0&  0&  0&  0&  1&  0&  0&  0&  0&  0&  1&  0&  0&  -1  \\
  0&  0&  0&  0&  0&  0&  0&  0&  0&  0&  0&  1&  0&  1&  0&  -1  \\
  0&  0&  0&  0&  0&  0&  0&  0&  0&  1&  0&  0&  0&  0&  0&  -1  \\
  0&  0&  0&  0&  0&  0&  1&  0&  0&  0&  0&  0&  0&  1&  -1&  0  \\
  0&  0&  0&  0&  0&  0&  0&  0&  0&  0&  0&  0&  1&  0&  -1&  1   \\
  0&  0&  0&  0&  1&  0&  0&  0&  0&  0&  0&  0&  0&  0&  -1&  0   \\
  -1&  0&  0&  0&  0&  0&  0&  0&  0&  0&  0&  0&  1&  1&  0&  0   \\
  -1&  0&  0&  0&  0&  0&  0&  0&  0&  0&  0&  1&  0&  0&  1&  0   \\
  0&  1&  1&  0&  0&  0&  0&  0&  0&  -1&  0&  0&  0&  0&  0&  0   \\
  0&  0&  0&  1&  1&  0&  0&  0&  0&  -1&  0&  0&  0&  0&  0&  0   \\
  0&  0&  0&  0&  0&  0&  0&  0&  0&  -1&  1&  0&  0&  0&  0&  0
\end{pmatrix}
$$
The matrix $b$ is 
$$
b = ( -2, -2, -2, -2, -2, -2, -1, -2, -2, -1, -2, -2, -2, -2, -1)\tr 
$$
Using an LP solver, we find that 
$x = (10, 0, 0, 0, 0, 0, 2, 0, 0, 2, 0, 2, 0, 0, 6, 4)$ 
solves $Ax\le b$ and $x\ge 0$.  
Therefore $u = (11, 1, 1, 1, 1, 1, 3, 1, 1, 3, 1, 3, 1, 1, 7, 5)$ 
satisfies $u\cdot v >0$ for all $v \in \DLog_{Z}(F)$. Let~$\sigma$ be a term 
ordering represented by a matrix whose first row is~$u$. Then
the polynomials in the reduced $\sigma$-Gr\"obner basis of~$I_F$ are
$$
\begin{array}{l}
x_{12} - (x_{6} x_{9} +x_{11} x_{14}) \cr
x_{7} \ - (x_{6} x_{9} +x_{8} x_{13}) \cr
x_{16} -(x_{6} x_{9} x_{13} +x_{8} x_{13}^2  +x_{6} x_{9} x_{14} +x_{11} x_{14}^2  
+x_{2} x_{3} -x_{4} x_{5} +x_{11}) \cr
x_{15} - (x_{6} x_{9} x_{13}^2  +x_{8} x_{13}^3  +x_{6} x_{9} x_{13} x_{14} 
+x_{11} x_{13} x_{14}^2  +x_{2} x_{3} x_{13} -x_{4} x_{5} x_{13} \cr 
\qquad  +x_{6} x_{9} x_{14} +x_{8} x_{13} x_{14} 
+x_{11} x_{13} +x_{5}) \cr
x_{1} \ -( -x_{6}^2 x_{9}^2 x_{13}^2  -x_{6} x_{8} x_{9} x_{13}^3  
-x_{6}^2 x_{9}^2 x_{13} x_{14} -x_{6} x_{9} x_{11} x_{13}^2 x_{14} 
-x_{8} x_{11} x_{13}^3 x_{14} \cr
\qquad -2 x_{6} x_{9} x_{11} x_{13} x_{14}^2  -x_{11}^2 x_{13} x_{14}^3  
-x_{2} x_{3} x_{6} x_{9} x_{13} +x_{4} x_{5} x_{6} x_{9} x_{13} -x_{6}^2 x_{9}^2 x_{14} \cr
\qquad -x_{6} x_{8} x_{9} x_{13} x_{14} -x_{2} x_{3} x_{11} x_{13} x_{14} 
+x_{4} x_{5} x_{11} x_{13} x_{14} -x_{6} x_{9} x_{11} x_{14}^2  -x_{8} x_{11} x_{13} \cr
\qquad   +x_{14}^2  -x_{6} x_{9} x_{11} x_{13} -x_{11}^2 x_{13} x_{14} -x_{5} x_{6} x_{9} 
-x_{5} x_{11} x_{14} -x_{13} x_{14}) \cr
x_{10} - (x_{2} x_{3} -x_{4} x_{5} +x_{11}) 
\end{array}
$$
Thus we have found a tuple of indeterminates~$Z$ and a tuple of polynomials 
which is coherently $Z$-separated. Later on, this will allow us to re-embed
$P/I_F$ and show that this ring is isomorphic to a polynomial ring 
in 10 indeterminates (see Example~\ref{ex:sixteencontinued}).

Now let us try a different choice of~$Z$.
If we choose $x_{10}$ instead of $x_{16}$ in the linear part of~$f_3$, 
then $x_{5}$ instead of $x_{15}$ in the linear part of~$f_4$, 
and $x_{11}$ instead of $x_{10}$ in the  linear part of~$f_6$,
we get $\widetilde{Z} = (x_{12}, x_7, x_{10}, x_5, x_1, x_{11})$.
Applying the algorithm described in Corollary~\ref{cor:checkZ-sep}
in this case allows us to show that~$F$ is not coherently $\widetilde{Z}$-separated.

However, the partial tuple $F' = (f_1, f_2, f_3, f_4, f_5)$ is coherently separated
with respect to $Z'=(x_{12}, x_7, x_{10}, x_5, x_1)$.
The corresponding polynomials in the reduced Gr\"obner basis of~$I_{F'}$ are
\begin{align*}
x_{12} &- (x_{6} x_{9} +x_{11} x_{14})\\
x_{7} \ &- (x_{6} x_{9} +x_{8} x_{13})\\
x_{10} &-  (-x_{6} x_{9} x_{13} -x_{8} x_{13}^2  -x_{6} x_{9} x_{14} -x_{11} x_{14}^2  +x_{16})\\
x_{5} \ &-( -x_{6} x_{9} x_{14} -x_{8} x_{13} x_{14} -x_{13} x_{16} +x_{15})\\
x_{1} \ &- (-x_{6} x_{9} x_{15} -x_{11} x_{14} x_{15} -x_{13} x_{14})
\end{align*}
Later we will see that these polynomials result in a less useful re-embedding
of $P/I_F$ (see Example~\ref{ex:sixteencontinued}).
\end{example}

\bigskip\bigbreak
%
%

\section{$Z$-Separating Re-Embeddings}
\label{sec:Z-Separating Re-Embeddings}

In this section we apply the material developed in the previous sections 
in order to find good re-embeddings of affine schemes, extending and building on
the investigation of this topic in~\cite{KLR2}. 

Let $P=K[x_1,\dots,x_n]$ be a polynomial ring
over a field~$K$, and let~$I$ be an ideal in~$P$.
Our goal is to find a tuple~$Z$ of indeterminates in $\{x_1,\dots,x_n\}$
which can be eliminated easily, without resorting to potentially very heavy
Gr\"obner basis computations. In other words, we look for an isomorphism
of $K$-algebras $P/I \cong \widehat{P} / (I \cap \widehat{P})$,
where $\widehat{P}=K[Y]$ is the polynomial ring in the remaining indeterminates.

As in the previous sections, we let $\M = \langle x_1,\dots,x_n\rangle$,
we denote the tuple $(x_1,\dots,x_n)$ by~$X$, we let $Z=(z_1,\dots,z_s)$ 
be a tuple of distinct indeterminates in~$P$, and we assume that~$I$ is 
an ideal in~$P$ which is contained in~$\M$.
Recall that, given $f_1,\dots,f_s\in \M\setminus \{0\}$, 
a term ordering~$\sigma$ on~$\mathbb{T}^n$ is called a $Z$-separating term ordering
for the ideal $I_F=\langle f_1,\dots,f_s\rangle$ if we have
$\LT_\sigma(I_F) = \langle z_1,\dots,z_s\rangle$.
Now we generalize this definition to arbitrary ideals~$I$ contained in~$\M$
as follows.

\begin{definition}\label{def:Z-sepTO}
Let $I$ be an ideal in~$P$ which is contained in~$\M$. 
\begin{enumerate}
\item[(a)] A term ordering~$\sigma$
on~$\mathbb{T}^n$ is called a {\bf $Z$-separating term ordering for~$I$}
if there exist polynomials $f_1,\dots,f_s\in I\setminus \{0\}$ such that~$\sigma$
is a $Z$-separating term ordering for~$I_F = \langle f_1,\dots,f_s\rangle$.

\item[(b)] The ideal~$I$ is called {\bf $Z$-separating} if there exists a $Z$-separating
term ordering~$\sigma$ for~$I$.

\end{enumerate}
\end{definition}

Let us characterize this notion in several ways and show that it is
equivalent to the one defined in~\cite{KLR2}, Definition~2.9.a. 

\begin{proposition}{\bf (Characterization of $Z$-Separating Term 
Orderings)}\label{prop:charZ-sepTO}\\
As above, let $I\subseteq \M$ be an ideal in~$P$, and let $\sigma$ be a term
ordering on~$\mathbb{T}^n$. Then the following conditions are equivalent.
\begin{enumerate}
\item[(a)] The term ordering~$\sigma$ is a $Z$-separating term ordering for~$I$.

\item[(b)] There exist polynomials $f_1,\dots,f_s\in I \setminus \{0\}$ such 
that~$\sigma$ is a $Z$-separating term ordering for $F=(f_1,\dots,f_s)$, i.e.,
such that $\LT_\sigma(f_i)=z_i$ for $i=1,\dots,s$.

\item[(c)] There exist polynomials $f_1,\dots,f_s\in I \setminus \{0\}$ such 
that $\LT_\sigma(f_i)=z_i$ for $i=1,\dots,s$ and $F=(f_1,\dots,f_s)$ is 
coherently $Z$-separating.

\item[(d)] The reduced $\sigma$-Gr\"obner basis of~$I$ has the shape
$\{f_1,\dots,f_s,\, g_1,\dots,g_t\}$ where $F=(f_1,\dots,f_s)$
is a coherently $Z$-separating tuple and $\{ g_1,\dots,g_t\}$
is the reduced $\sigma_Y$-Gr\"obner basis of $I\cap \widehat{P}$.

\end{enumerate}
\end{proposition}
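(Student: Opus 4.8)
The plan is to prove the chain of implications $(a)\Rightarrow(b)\Rightarrow(c)\Rightarrow(d)\Rightarrow(a)$, leaning heavily on Proposition~\ref{prop:Z-sep}. The implication $(a)\Rightarrow(b)$ is essentially immediate: by Definition~\ref{def:Z-sepTO}.a there are $f_1,\dots,f_s\in I\setminus\{0\}$ with $\LT_\sigma(I_F)=\langle z_1,\dots,z_s\rangle$; since each $z_i$ is a term of degree one and they generate $\LT_\sigma(I_F)$, the reduced $\sigma$-Gr\"obner basis of $I_F$ must have the form $\{z_1-h_1,\dots,z_s-h_s\}$ with $h_i\in K[Y]$ (as in the proof of Proposition~\ref{prop:Z-sep}.c), and this is visibly a $Z$-separating tuple for~$\sigma$. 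The step $(b)\Rightarrow(c)$ is where we replace a $Z$-separating tuple by a coherently $Z$-separating one: by Proposition~\ref{prop:Z-sep}.d, making the $f_i$ monic and interreducing them produces the reduced $\sigma$-Gr\"obner basis of $I_F$, which has the form $\{z_1-h_1,\dots,z_s-h_s\}$ with $h_i\in K[Y]$; this new tuple lies in $I$, still has $\LT_\sigma(\,\cdot\,)=z_i$, and is coherently $Z$-separating by Proposition~\ref{prop:Z-sep}.b (distinct $z_i$ cannot appear in any $h_j$).

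The implication $(c)\Rightarrow(d)$ is the technical heart of the statement. Starting from a coherently $Z$-separating tuple $F=(f_1,\dots,f_s)$ in $I$ with $\LT_\sigma(f_i)=z_i$, I would first normalize so that each $f_i=z_i-h_i$ with $h_i\in K[Y]$ (using coherence: no $z_j$ divides any term of $h_i$, and in particular no $z_i$ does). The claim is that the reduced $\sigma$-Gr\"obner basis of~$I$ is obtained by adjoining to $\{z_1-h_1,\dots,z_s-h_s\}$ the reduced $\sigma_Y$-Gr\"obner basis $\{g_1,\dots,g_t\}$ of $I\cap\widehat P$. The natural route is: (i) show $\{f_1,\dots,f_s,g_1,\dots,g_t\}$ generates~$I$ — given $p\in I$, reduce modulo $z_i-h_i$ to eliminate all $z$'s, landing in $I\cap\widehat P$, then express the remainder via the $g_j$; (ii) show it is a $\sigma$-Gr\"obner basis — the leading terms are $z_1,\dots,z_s$ together with $\LT_{\sigma_Y}(g_1),\dots,\LT_{\sigma_Y}(g_t)$, which are in $K[Y]$, so all S-polynomials either involve two coprime $z_i,z_j$ (covered by \cite{KR1}, Cor.~2.5.10-style coprimality), one $z_i$ against a $g_j$ (the $z_i$-part cancels cleanly because $g_j\in K[Y]$ and each $z_i$ occurs only linearly in exactly one generator), or two of the $g_j$ (reduces to zero by the Gr\"obner property of $\{g_j\}$ over $K[Y]$); (iii) check reducedness — each $z_i$ divides no leading term of another element and no term of $h_i$ (coherence) or of any $g_j$ (they lie in $K[Y]$), and $\{g_j\}$ is already reduced in $K[Y]$, while no $\LT(g_j)$ is divisible by any $z_i$. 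Finally $(d)\Rightarrow(a)$ is trivial: the displayed shape exhibits the $f_i\in I$ with $\LT_\sigma(f_i)=z_i$, so $\sigma$ is a $Z$-separating term ordering for~$I$ by Definition~\ref{def:Z-sepTO}.a (with $I_F=\langle f_1,\dots,f_s\rangle$, noting $\LT_\sigma(I_F)=\langle Z\rangle$ by coprimality).

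I expect the main obstacle to be the clean bookkeeping in step~$(c)\Rightarrow(d)$(ii), specifically verifying that the only nontrivial S-polynomial reductions among the $g_j$ are exactly those forced by $\{g_j\}$ being a $\sigma_Y$-Gr\"obner basis of $I\cap\widehat P$, and that the mixed S-polynomials $S(z_i-h_i,\,g_j)$ reduce to zero. The subtle point is that $g_j\in I\cap\widehat P$ is an element of $I$ but a priori need not be expressible over $K[Y]$ in terms of the other $g_k$ using only $\sigma_Y$-reduction unless one first confirms that $\{g_j\}$ generates $I\cap\widehat P$ — which is why step~(i) (generation) should be carried out before step~(ii), and why the identification $I\cap\widehat P = (\text{the ideal obtained by eliminating }z\text{'s from }I)$ via the isomorphism $P/I\cong\widehat P/(I\cap\widehat P)$ needs to be invoked carefully. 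Once generation is in hand, the S-polynomial analysis is routine Buchberger-criterion work, and reducedness is an inspection. I would also remark that this proposition simultaneously re-proves the equivalence with \cite{KLR2}, Definition~2.9.a, since condition~(d) is precisely the Gr\"obner-basis shape used there.
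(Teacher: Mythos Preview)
Your overall strategy is sound and, in the key implication, more self-contained than the paper's: where you argue $(c)\Rightarrow(d)$ directly via Buchberger's criterion, the paper instead proves $(b)\Rightarrow(d)$ by invoking \cite{KLR2}, Proposition~2.12 (and runs the cycle $(a)\Rightarrow(c)\Rightarrow(b)\Rightarrow(d)\Rightarrow(a)$). Your steps~(i) and~(ii) are fine; in particular, every mixed S-polynomial $S(z_i-h_i,\,g_j)$ reduces to zero simply because the leading terms $z_i$ and $\LT_\sigma(g_j)\in\mathbb{T}(Y)$ are coprime, so Buchberger's first criterion applies --- there is no need for the more delicate ``cancels cleanly'' reasoning you sketch, and no circularity involving generation of $I\cap\widehat P$.

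There is, however, a genuine gap in step~(iii). Your reducedness checklist verifies that no $z_i$ divides any non-leading term of the proposed basis and that the $g_j$ are reduced among themselves, but it omits one condition: that no $\LT_\sigma(g_j)$ divides a term of~$h_i$. This can fail. For instance, take $P=K[y,z]$, $Z=(z)$, and $I=\langle z-y^2,\; y^2-y\rangle$. The tuple $(z-y^2)$ is coherently $Z$-separating and $I\cap K[y]=\langle y^2-y\rangle$, but $\{z-y^2,\; y^2-y\}$ is \emph{not} reduced, since $\LT_\sigma(g_1)=y^2$ divides the term $y^2$ of~$h_1$. The fix is to replace each $h_i$ by $h_i'=\NF_{\{g_1,\dots,g_t\}}(h_i)\in K[Y]$. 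Then $z_i-h_i'\in I$ because $h_i-h_i'\in\langle g_1,\dots,g_t\rangle\subseteq I$; every term of $h_i'$ is $\sigma$-smaller than some term of $h_i$ and hence $<_\sigma z_i$, so $\LT_\sigma(z_i-h_i')=z_i$; the new tuple $(z_1-h_1',\dots,z_s-h_s')$ is still coherently $Z$-separating; and now the combined set is the reduced $\sigma$-Gr\"obner basis of~$I$. With this correction your argument is complete.
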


\begin{proof}
To prove that~(a) implies~(c), we let $f_1,\dots,f_s\in I \setminus \{0\}$
such that the ideal $I_F = \langle f_1,\dots,f_s\rangle$ satisfies
$\LT_\sigma(I_F) = \langle Z \rangle$. Now we let $\{\tilde{f}_1,\dots,
\tilde{f}_s\}$ be the reduced $\sigma$-Gr\"obner basis of~$I_F$, and
let $\widetilde{F} = (\tilde{f}_1,\dots,\tilde{f}_s)$. 
By Proposition~\ref{prop:Z-sep}.c, the tuple $\widetilde{F}$ is coherently
$Z$-separating and~$\sigma$ is a $Z$-separating term ordering for~$\widetilde{F}$.

As (c)$\Rightarrow$(b) is obvious, we prove (b)$\Rightarrow$(d) next.
Let $\{\tilde{f}_1,\dots,\tilde{f}_s\}$ be the reduced $\sigma$-Gr\"obner
basis of~$I_F = \langle f_1,\dots,f_s\rangle$. By Proposition~\ref{prop:Z-sep}.d,
the polynomials in~$\widetilde{F} = (\tilde{f}_1,\dots,\tilde{f}_s)$
are obtained by making the polynomials in~$F$ monic and interreducing them.
Letting $\{g_1,\dots,g_t\}$ be the reduced $\sigma_Y$-Gr\"obner basis of
$I\cap K[Y]$, it follows that $\{\tilde{f}_1,\dots, \tilde{f}_s, g_1,\dots,g_t\}$
is a $Z$-separating Gr\"obner basis of~$I$ in the sense of~\cite{KLR2}, 
Definition 2.9.b. Then Proposition 2.12 of~\cite{KLR2} shows that  
the reduced $\sigma$-Gr\"obner basis of~$I$ has the form 
$\{z_1 - h_1, \dots, z_s - h_s,\, g_1,\dots,g_t\}$,
where $h_1, \dots, h_s \in K[Y]$.
Since $(z_1-h_1, \dots, z_s - h_s)$ is coherently $Z$-separating, 
the claim follows.

Finally, we note that (d)$\Rightarrow$(a) is obviously true. 
\end{proof}

The following remark provides us with a plenty of $Z$-separating term orderings
for~$I$.

\begin{remark}
Let $I\subseteq \M$ be an ideal in~$P$, and assume that there
exists a $Z$-separating term ordering~$\sigma$ for~$I$. Then every elimination 
ordering~$\tau$ for~$Z$ is a $Z$-separating term ordering for~$I$, because
the reduced $\sigma$-Gr\"obner basis of~$I$ has the shape given in part~(d)
of the proposition. 
\end{remark}

Using the reduced Gr\"obner basis of~$I$ with respect to a $Z$-separating
term ordering, we can now recall the central definition.

\begin{definition}\label{def:sepembed}
Let $I\subseteq\M$ be an ideal in~$P$, and assume that there exists
a $Z$-separating term ordering~$\sigma$ for~$I$. Let $\{z_1-h_1, \dots,
z_s-h_s,\, g_1,\dots,g_t\}$ be the reduced $\sigma$-Gr\"obner basis of~$I$,
where $h_i,g_j \in \widehat{P} = K[Y]$. Then the $K$-algebra isomorphism
$$
\Phi:\; P/I \;\longrightarrow\; \widehat{P} / (I \cap \widehat{P})
$$
given by $\Phi(\bar{x}_i) = \bar{x}_i$ for $x_i \notin Z$ and 
$\Phi(\bar{x}_i) = \bar{h}_j$ for $x_i=z_j\in Z$
is called the {\bf $Z$-separating re-embedding of~$I$}.
\end{definition}

\begin{remark}\label{rem:indepchoice}
In~\cite{KLR2}, Proposition~2.14, it is shown that the isomorphism~$\Phi$
is, in fact, independent of the choice of a $Z$-separating term ordering for~$I$,
as long as one exists. It is important to note that the tuple $F=(f_1,\dots,f_s)$, 
whose existence is required in the definition of a $Z$-separating term
ordering for~$I$, may change during the construction of the isomorphism~$\Phi$.
\end{remark}

In actual fact, the computation of a reduced Gr\"obner basis will be out of
reach in non-trivial cases, and the true way to construct~$\Phi$ will be:
\begin{enumerate}
\item[(1)] Find a (hopefully large) tuple~$Z$ and a tuple of polynomials
$F=(f_1,\dots,f_s)$ in~$I$ such that $z_i \in \Supp(f_i)$ for $i=1,\dots,s$.

\item[(2)] Using the methods explained in Remark~\ref{usingStronglySep} and 
Section~\ref{sec:Finding Z-Separating Tuples}, show that~$F$ is $Z$-separating
and find a $Z$-separating term ordering~$\sigma$.

\item[(3)] Interreduce the polynomials in~$F$ and make them monic in order to compute
a $Z$-separating system of generators $\{\tilde{f}_1,\dots,\tilde{f}_s\}$
of~$I_F = \langle f_1,\dots,f_s\rangle$ such that $\tilde{f}_i = z_i - h_i$
with $h_i \in K[Y]$ for $i=1,\dots,s$.

\item[(4)] Define $\Phi:\; P/I \cong \widehat{P}/(I\cap\widehat{P})$
using~$\widetilde{F}= (\tilde{f}_1,\dots,\tilde{f}_s)$ and use the
observation that this results in the same map as the usage
of the reduced $\sigma$-Gr\"obner basis of~$I$ by~\cite{KLR2}, 
Theorem 2.13.c.
\end{enumerate}

Let us apply this procedure in some concrete cases.

\begin{example}\label{ex:twoZ}
Let $P = \mathbb{Q}[x, y, z, w]$, and let $I = \langle f_1, f_2, f_3, f_4\rangle $,
where $f_1=x^2 -y -z$, $f_2=x^3 -y -w$, $f_3=x^4 -z -w$, and $f_4= x^2-w$.
\begin{enumerate}
\item[(a)] For $Z = (z, w)$, the tuple $F = (f_1, f_2)$ is clearly 
coherently $Z$-separating. By applying Corollary~\ref{cor:checkZ-sep} and an LP solver 
we find that $u=(1,2,3,4)$ satisfies $u\cdot v > 0$ for all~$v$ in 
$$
\DLog_Z(F) \;=\; \{ (-2,0,1,0),\, (0,-1,1,0),\, (-3,0,0,1),\,(0,-1,0,1) \}.
$$
Next we use~$u$  as the first row of a matrix of size $4\times 4$
defining a term ordering~$\sigma$. Then~$\sigma$ is a $Z$-separating term ordering
for~$I$.

The reduced $\sigma$-Gr\"obner basis of~$I$ turns out to be 
$$
\qquad G = \{  z -x^2 +y,\; w -x^2,\; x^3 -x^2 -y,\;  xy -x^2 +2y,\;  y^2 -5x^2 +16y\}
$$
Hence we get the isomorphism $\Phi: P/I \To K[Y]/(I\cap K[Y])$ where $K[Y] = \QQ[x,y]$ 
and $I\cap K[Y] = \langle x^3 -x^2 -y,\; xy -x^2 +2y,\; y^2 -5x^2 +16y\rangle $.

\item[(b)] Now we choose $F' = (f_1, f_3)$ and note that the tuple $F'$ is not coherently 
$Z$-separating. To check whether~$F'$ is $Z$-separating, we apply
Corollary~\ref{cor:checkZ-sep} again. A call to an LP solver tells
us that $u' = (1,1,3,5)$ satisfies $u' \cdot v >0$ for all~$v$ in
$$ 
\quad\DLog_Z(F') \;=\; \{ (-2,0,1,0),\, (0,-1,1,0),\, (-4,0,0,1),\, (0,0,-1,-1) \}.
$$
Using~$u'$ as the first row of a suitable $4\times 4$ matrix, we define a term
ordering~$\tau$ on~$\mathbb{T}^4$.
Here the reduced $\tau$-Gr\"obner basis of~$I$ turns out to be
\begin{align*}
G' \;=\;   \{ &z -\tfrac{1}{5}y^2 -\tfrac{11}{5}y,\; w -\tfrac{1}{5}y^2 -\tfrac{16}{5}y,\;
x^2 -\tfrac{1}{5}y^2 -\tfrac{16}{5}y,\\  
& xy -\tfrac{1}{5}y^2 -\tfrac{6}{5}y,\;  y^3 +17y^2 -9y  \}
\end{align*}
Altogether, we obtain another isomorphism $\Phi': P/I \To K[Y]/(I\cap K[Y])$.
However, we observe that the reduced $\sigma_Y$-Gr\"obner basis~$G$
and the reduced $\tau_Y$-Gr\"obner basis~$G'$ of $I\cap K[Y]$ are very different.

In particular, in~$G$ we have the polynomial $g = x^3 -x^2 -y$. This allows us 
to increase~$Z$ to $Z' = (z, w, y)$, as we have the $Z'$-separating tuple $(f_1, f_2, g)$.
An even better re-embedding of~$P/I$ results. With $G'$ nothing like this happens.
\end{enumerate}
\end{example}

\begin{example}\label{ex:sixteencontinued}
Let us go back to Example~\ref{ex:sixteen}. In $P=\mathbb{Q}[x_1,\dots,x_{16}]$,
we studied a tuple of polynomials $F=(f_1,\dots,f_6)$ and the ideal
$I=\langle f_1,\dots,f_6\rangle$ they generate.
\begin{enumerate}
\item[(a)] Using the tuple $Z = (x_{12}, x_7, x_{16}, x_{15}, x_1, x_{10})$,
we found a vector $u = (11, 1,1,1,1,1, 3,1,1, 3,1,3,1,1, 7,5)$ 
such that every term ordering~$\sigma$ represented by a matrix whose first row is~$u$
has the reduced $\sigma$-Gr\"obner basis described in Example~\ref{ex:sixteen}. 
Using this Gr\"obner basis, the corresponding $Z$-separating re-embedding of~$I$ is
$\Phi_Z:\; P/I \longrightarrow \widehat{P}$, where we have
$\widehat{P} =\QQ[x_2, x_3, x_4, x_5, x_6, x_8, x_9, z_{11}, x_{13},x_{14}]$.

\item[(b)] Instead, the choice $Z' = (x_{12}, x_7, x_{10}, x_5, x_1)$ leads
to the re-embedding $\Phi_{Z'}:\; P/I \longrightarrow \widehat{P}'/J$ with
$\widehat{P}' = \QQ[x_2, x_3, x_4, x_6, x_8, x_9, x_{11}, x_{13}, x_{14}, x_{15}, x_{16}]$
and the ideal $J = I\cap\widehat{P}' = \langle f\rangle$ generated by the polynomial
\begin{align*}
f \;=\; &x_{4} x_{6} x_{9} x_{14} +x_{4} x_{8} x_{13} x_{14} +x_{6} x_{9} x_{13} 
+x_{8} x_{13}^2  +x_{6} x_{9} x_{14}\\ 
&+x_{11} x_{14}^2  +x_{4} x_{13} x_{16} +x_{2} x_{3} -x_{4} x_{15} +x_{11} -x_{16}
\end{align*}
Since~$f$ is neither separating with
respect to~$x_{11}$ nor with respect to~$x_{16}$, it is not easy to see that
$\widehat{P}'/J$ is in fact isomorphic to a polynomial ring. In this sense, the
re-embedding via~$Z$ is better than the re-embedding via~$Z'$.
\end{enumerate}
\end{example}

In view of these examples, the choice of a set~$Z$ of maximal cardinality such that~$I$
contains a $Z$-separating tuple becomes a central question. 
In Section~\ref{sec:Computing} we will come back to this point.

\bigskip\bigbreak
%
%

\section{$Z$-Restricted Gr\"obner Fans}
\label{sec:Z-Restricted Groebner Fans}

In the preceding section we saw that in order to find good re-embeddings 
of a polynomial ideal, we may try to find a tuple of indeterminates~$Z$
and a $Z$-separating term ordering for~$I$. The possible choices of term
orderings leading to distinct reduced Gr\"obner bases of~$I$ are
classified by the Gr\"obner fan of~$I$, introduced and studied first in~\cite{MR}.
Here we want to classify the possible choices of $Z$-separating term orderings
leading to distinct reduced Gr\"obner bases of~$I$.

First things first, let us recall the setting and the general definitions.
Let $P=K[x_1,\dots,x_n]$ be a polynomial ring over a field~$K$, let 
$\M = \langle x_1,\dots,x_n\rangle$, let $I\subseteq \M$ be a non-zero ideal
of~$P$, and let $Z=(z_1,\dots,z_s)$ be a tuple of distinct indeterminates
$z_i \in \{x_1,\dots,x_n\}$ for $i=1,\dots,s$.

Moreover, recall that, given a term ordering~$\sigma$ and the reduced 
$\sigma$-Gr\"obner basis $G = \{ g_1,\dots,g_r\}$ of~$I$, the set of pairs
$$
\overline{G} \;=\; \{\,  (\LT_\sigma(g_1),\, g_1), \dots, (\LT_\sigma(g_r),\, g_r) \,\} 
$$
is called the {\bf marked reduced $\sigma$-Gr\"obner basis} of~$I$. The set
of all distinct marked reduced $\sigma$-Gr\"obner bases of~$I$ is called
the {\bf Gr\"obner fan} of~$I$ and is denoted by~$\GFan(I)$.
Next we restrict our attention to those marked reduced $\sigma$-Gr\"obner bases
of~$I$ which are $Z$-separating.

\begin{definition}\label{def:Z-GFan}
Let $Z=(z_1,\dots,z_s)$ be a tuple of distinct indeterminates such that
$z_i \in \{x_1,\dots,x_n\}$ for $i=1,\dots,s$. The set of all marked
reduced $\sigma$-Gr\"obner bases 
$\overline{G} = \{ (\LT_\sigma(g_1),g_1),\dots, (\LT_\sigma(g_r), g_r)\}$
of~$I$ such that for $j=1,\dots,s$ there are indices $i_j \in \{1,\dots,r\}$ with
$z_j = \LT_\sigma(g_{i_j})$ is called the {\bf $Z$-restricted
Gr\"obner fan} of~$I$ and is denoted by $\GFan_Z(I)$.
\end{definition}

Notice that, by Proposition~\ref{prop:Z-sep}.d, every choice of a 
$Z$-separating term ordering~$\sigma$ for~$I$
corresponds to a reduced $\sigma$-Gr\"obner basis of~$I$
of the type described in the definition,
but in general not in a unique way. It is also possible to
require that some the leading terms of a reduced Gr\"obner basis
are predescribed terms given by a tuple $T=(t_1,\dots,t_s)$ with $t_i\in \mathbb{T}^n$
rather than predescribed indeterminates given by a tuple $Z=(z_1,\dots,z_s)$
with $z_i \in \{x_1,\dots,x_n\}$. This more general type of restriction
leads to the $T$-restricted Gr\"obner fan introduced in Section~\ref{sec:T-RestrGFan}.

For now, let us have a look at a concrete example of a $Z$-restricted 
Gr\"obner fan.

\begin{example}
Extending Example~\ref{ex:SumAndDiff}, we let $P=\mathbb{Q}[x,y,z]$, let $Z=(x,y)$, and
let $I=\langle f_1,f_2,f_3\rangle$, where $f_1 = x+y - z^2$ and $f_2 = x-y + z^2$
and $f_3= z - y^2$.
If we let $g_1 = \frac{1}{2}\, (f_1+f_2) = x$ and $g_2 = \frac{1}{2}\, (f_1 -f_2)
= y - z^2$ then the tuple $(g_1,g_2)$ is $Z$-separated and the reduced
$\sigma$-Gr\"obner basis of~$I$ for any term ordering~$\sigma$
satisfying $y >_\sigma z^2$ is $G= \{g_1,\, g_2,\, z^4-z\}$.
Here $\GFan_Z(I)$ consists of a single marked reduced Gr\"obner basis, namely
$\overline{G} \;=\; \{\, (x, x),\, (y, y-z^2),\, (z^4, z^4-z) \,\}$.
\end{example}

To clarify the structure of $\GFan_Z(I)$ in general, we introduce
the following map.

\begin{definition}
Let $Z=(z_1,\dots,z_s)$ be a tuple of distinct indeterminates
in $\{x_1,\dots,x_n\}$, and let $Y=(y_1,\dots,y_{n-s})$, where
$\{y_1,\dots,y_{n-s}\} = \{x_1,\dots,x_n\} \setminus \{z_1,\dots,z_s\}$.
Let $I\subseteq\M$ be an ideal in~$P$ such that $\GFan_Z(I)$
is not empty. Consider the map
$$
\Gamma_Z:\;  \GFan_Z(I) \;\longrightarrow \GFan(I\cap K[Y])
$$
given as follows: for a $Z$-separating reduced $\sigma$-Gr\"obner
basis $G=\{g_1,\dots,g_r\}$ of~$I$, we have $r\ge s$ and we may assume 
$\LT_\sigma(g_i)=z_i$ for $i=1,\dots,s$. Next, we let
$$
\Gamma_Z(\overline{G}) \;=\; \{\, (\LT_\sigma(g_{s+1}),\, g_{s+1}),\; \dots,\;
(\LT_\sigma(g_r),\, g_r) \,\}
$$
Then the map $\Gamma_Z$ is called the {\bf $Z$-restricted GFan projection}.
\end{definition}

Notice that, in the setting of this definition, the set $\{g_{s+1},
\dots,g_r\}$ is the reduced $\sigma$-Gr\"obner basis of $I\cap K[Y]$
by Proposition~\ref{prop:charZ-sepTO}.d. Thus the map~$\Gamma_Z$ is well-defined.
Our main result in this section is that the map~$\Gamma_Z$ is bijective.
For its proof we need the following result which is part of the 
theory of Gr\"obner fans (see~\cite{MR}). For the convenience of the readers,
we include it here together with a proof.

\begin{lemma}\label{lem:sameGB}
Let $I$ be an ideal in $P=K[x_1,\dots,x_n]$, let~$\sigma$ and~$\tau$ be two term 
orderings on~$\mathbb{T}^n$, let $G=\{g_1,\dots, g_r\}$ be the reduced 
$\sigma$-Gr\"obner basis of~$I$, and let $G' = \{g'_1, \dots, g'_{r'}\}$ 
be the reduced $\tau$-Gr\"obner bases of~$I$.
If we have $r=r'$ and $\LT_\sigma(g_i) = \LT_\tau(g'_i)$ for $i=1,\dots,r$, then $G = G'$.
\end{lemma}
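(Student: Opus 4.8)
The plan is to show that the two reduced Gröbner bases $G$ and $G'$ actually generate the same leading term ideal and then exploit the uniqueness of reduced Gröbner bases. First I would argue that $\LT_\sigma(I) = \LT_\tau(I)$. Indeed, since $G$ is the reduced $\sigma$-Gröbner basis and $G'$ the reduced $\tau$-Gröbner basis of the same ideal $I$, we have $\LT_\sigma(I) = \langle \LT_\sigma(g_1),\dots,\LT_\sigma(g_r)\rangle$ and $\LT_\tau(I) = \langle \LT_\tau(g'_1),\dots,\LT_\tau(g'_{r'})\rangle$. The hypothesis $r=r'$ together with $\LT_\sigma(g_i)=\LT_\tau(g'_i)$ for all $i$ immediately gives $\LT_\sigma(I)=\LT_\tau(I)$; call this monomial ideal $L$. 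Moreover, because $G$ and $G'$ are \emph{reduced}, the sets $\{\LT_\sigma(g_1),\dots,\LT_\sigma(g_r)\}$ and $\{\LT_\tau(g'_1),\dots,\LT_\tau(g'_{r'})\}$ are each the minimal monomial generating system of $L$, hence they coincide as sets; after reindexing (which the hypothesis already fixes for us) we have $\LT_\sigma(g_i)=\LT_\tau(g'_i)$ as the $i$-th element of that unique minimal system.

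Next I would compare $g_i$ and $g'_i$ directly. Fix $i$ and set $h = g_i - g'_i \in I$ (both are monic with the same leading term $t_i := \LT_\sigma(g_i)=\LT_\tau(g'_i)$, so the leading terms cancel and $t_i \notin \Supp(h)$). Suppose $h \neq 0$. Since $h \in I$, we have $\LT_\sigma(h) \in \LT_\sigma(I) = L$, so $\LT_\sigma(h)$ is divisible by some $t_j$ with $j \in \{1,\dots,r\}$. Now $\LT_\sigma(h)$ is a term appearing in $\Supp(g_i) \cup \Supp(g'_i)$, and it is not $t_i$. But $g_i$ being reduced means no term in $\Supp(g_i)$ other than $t_i$ is divisible by any generator $t_j$ of $L$; likewise $g'_i$ being reduced (with respect to $\tau$, but the generating set of $L$ is the same $\{t_1,\dots,t_r\}$) means no term in $\Supp(g'_i)$ other than $t_i$ is divisible by any $t_j$. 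This contradicts divisibility of $\LT_\sigma(h)$ by some $t_j$. Hence $h=0$, i.e.\ $g_i = g'_i$, and therefore $G = G'$.

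The main obstacle — really the only subtle point — is making precise that the reducedness conditions for $g_i$ (stated in terms of $\sigma$ and the $\sigma$-leading-term ideal) and for $g'_i$ (stated in terms of $\tau$ and the $\tau$-leading-term ideal) are both statements about divisibility by the \emph{same} finite set of monomials $\{t_1,\dots,t_r\}$. This is exactly where the first paragraph does its work: once we know $\LT_\sigma(I)=\LT_\tau(I)=L$ and that both bases induce the same minimal generating system of $L$, the definition of "reduced" for either basis becomes "no non-leading term of any basis element lies in $L$", independently of which term ordering we started from. With that observation in hand, the argument in the second paragraph goes through verbatim. I would also remark that the conclusion $r=r'$ in the hypothesis is actually automatic once one knows $I$ is the same ideal and both bases are reduced, but since it is given we need not belabor this.
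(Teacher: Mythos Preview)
Your proof is correct and follows essentially the same line as the paper's: form the difference $h=g_i-g'_i\in I$, observe that reducedness of both bases forces every term of $\Supp(h)$ to lie outside the common leading term ideal, and derive a contradiction with $h\in I\setminus\{0\}$. You are simply more explicit than the paper about the point that the reducedness conditions for $G$ and $G'$ both refer to divisibility by the same monomial set $\{t_1,\dots,t_r\}$, which the paper takes for granted.
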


\begin{proof}
For a contradiction, we may assume that there exists an index $i\in \{1,\dots,r\}$ 
such that $g_i\ne g'_i$. Then, if we let $f = g_i - g'_i$, we have $f\in I \setminus \{0\}$
and no term of~$\Supp(f)$ is divisible by any leading term $\LT_\sigma(g_k)=\LT_\sigma(g'_k)$
with $k\in \{1,\dots,r\}$, because $G$ and~$G'$ are fully interreduced.
Therefore $\LT_\sigma(f) \notin \{ \LT_\sigma(g_1),\dots, \LT_\sigma(g_r)\}$
contradicts the hypothesis that~$G$ is a Gr\"obner basis of~$I$.
\end{proof}

Now we are ready to formulate and prove our main result.

\begin{theorem}{\bf (Structure of Restricted Gr\"obner Fans)}\label{thm:restrGFan}\\
Let $Z=(z_1,\dots,z_s)$ be a tuple of distinct indeterminates
in $\{x_1,\dots,x_n\}$, and let $Y=(y_1,\dots,y_{n-s})$ be such that
$\{y_1,\dots,y_{n-s}\} = \{x_1,\dots,x_n\} \setminus \{z_1,\dots,z_s\}$.
Let $I \subseteq \M$ be an ideal of~$P$ such that the set $\GFan_Z(I)$ is not empty.
Then the map $\Gamma_Z:\; \GFan_Z(I)\longrightarrow \GFan(I\cap K[Y])$ is bijective.
\end{theorem}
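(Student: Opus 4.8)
The plan is to construct an explicit inverse map $\Delta_Z : \GFan(I\cap K[Y]) \longrightarrow \GFan_Z(I)$ and show that $\Gamma_Z$ and $\Delta_Z$ are mutually inverse. Throughout I will freely use the characterization in Proposition~\ref{prop:charZ-sepTO}.d, which says that a marked reduced $\sigma$-Gr\"obner basis of~$I$ lies in $\GFan_Z(I)$ precisely when it has the shape $\{z_1-h_1,\dots,z_s-h_s\}\cup\{g_1,\dots,g_t\}$ with each $h_i\in K[Y]$ and $\{g_1,\dots,g_t\}$ the reduced $\sigma_Y$-Gr\"obner basis of $I\cap K[Y]$; this already tells us $\Gamma_Z$ is well-defined and that the fibre data on the $K[Y]$-side is exactly the marked reduced Gr\"obner basis of $I\cap K[Y]$ with respect to $\sigma_Y$.

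First I would address well-definedness of $\Delta_Z$. Given a marked reduced $\rho$-Gr\"obner basis $\overline{H}=\{(\LT_\rho(g_1),g_1),\dots,(\LT_\rho(g_t),g_t)\}$ of $I\cap K[Y]$ for some term ordering $\rho$ on $\mathbb{T}^{n-s}$, I would pick any term ordering $\sigma$ on $\mathbb{T}^n$ which is an elimination ordering for $Z$ and whose restriction $\sigma_Y$ equals $\rho$ (such $\sigma$ exists, e.g.\ an elimination block ordering with $\rho$ as the ordering on the $Y$-block). Since $\GFan_Z(I)$ is nonempty, there is at least one $Z$-separating term ordering for~$I$, so by the Remark following Proposition~\ref{prop:charZ-sepTO} every elimination ordering for~$Z$ is $Z$-separating for~$I$; in particular this $\sigma$ is, and by part~(d) its reduced $\sigma$-Gr\"obner basis is $\{z_1-h_1,\dots,z_s-h_s\}\cup\{g'_1,\dots,g'_{t'}\}$ where $\{g'_j\}$ is the reduced $\sigma_Y=\rho$-Gr\"obner basis of $I\cap K[Y]$. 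But the reduced $\rho$-Gr\"obner basis of $I\cap K[Y]$ is unique, so $\{g'_j\}=\{g_j\}$ and $t'=t$. I then set $\Delta_Z(\overline{H})$ to be this marked reduced $\sigma$-Gr\"obner basis of~$I$. The content of this step is that the $h_i$'s — and hence the whole marked basis on the $I$-side — are determined by the $g_j$'s and the $z_i$'s alone, independent of which elimination ordering $\sigma$ we chose: indeed, once we know $z_i \in \LT(I)$ and $\{g_j\}\subseteq I$, the polynomial $z_i-h_i$ is just the $\sigma$-normal form subtraction forcing $z_i$ down, and reducedness pins it down uniquely; alternatively two such $\sigma,\sigma'$ give reduced Gr\"obner bases with identical leading term sets $\{z_1,\dots,z_s\}\cup\LT_\rho(I\cap K[Y])$, so Lemma~\ref{lem:sameGB} forces them to coincide.

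Next, $\Gamma_Z\circ\Delta_Z=\mathrm{id}$ is immediate from the construction: applying $\Gamma_Z$ to $\Delta_Z(\overline{H})$ just discards the $s$ pairs marked by $z_1,\dots,z_s$ and returns $\{(\LT_\rho(g_1),g_1),\dots,(\LT_\rho(g_t),g_t)\}=\overline{H}$. For the other direction, take $\overline{G}\in\GFan_Z(I)$, say $\overline{G}$ is the marked reduced $\sigma$-Gr\"obner basis of~$I$ for some $Z$-separating term ordering~$\sigma$; by Proposition~\ref{prop:charZ-sepTO}.d it has the form $\{(z_1,z_1-h_1),\dots,(z_s,z_s-h_s)\}\cup\{(\LT_{\sigma_Y}(g_1),g_1),\dots,(\LT_{\sigma_Y}(g_t),g_t)\}$ and $\Gamma_Z(\overline{G})$ is the marked reduced $\sigma_Y$-Gr\"obner basis $\overline{H}$ of $I\cap K[Y]$. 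Now $\Delta_Z(\overline{H})$ is, by its well-definedness, the unique marked reduced Gr\"obner basis of~$I$ whose leading term set is $\{z_1,\dots,z_s\}\cup\LT_{\sigma_Y}(I\cap K[Y])$ — which is exactly the leading term set of $\overline{G}$. Applying Lemma~\ref{lem:sameGB} to $\overline{G}$ and $\Delta_Z(\overline{H})$ (both reduced Gr\"obner bases of~$I$, same cardinality $s+t$, same marked leading terms) yields $\Delta_Z(\overline{H})=\overline{G}$, so $\Delta_Z\circ\Gamma_Z=\mathrm{id}$ and $\Gamma_Z$ is bijective.

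The main obstacle I anticipate is the well-definedness of $\Delta_Z$, specifically verifying that the marked basis on the $I$-side depends only on the Gr\"obner fan cone data $\overline{H}$ of $I\cap K[Y]$ and not on the auxiliary elimination ordering~$\sigma$; this is where Lemma~\ref{lem:sameGB} does the real work, since it converts "same leading terms" into "same basis." A secondary technical point worth stating carefully is the existence, for an arbitrary term ordering $\rho$ on $K[Y]$, of an elimination ordering $\sigma$ for~$Z$ on~$P$ restricting to $\rho$; the standard block construction (put $Z$ in a first block ordered arbitrarily, $Y$ in a second block ordered by~$\rho$, using a representing matrix) handles this, and I would invoke it without belaboring the matrix bookkeeping. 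Everything else — surjectivity bookkeeping, and the observation that $\Gamma_Z$ is unaffected by the non-uniqueness of the $Z$-separating ordering producing a given cone — follows formally once these two points are in place.
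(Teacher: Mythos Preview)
Your proof is correct and follows essentially the same approach as the paper's. The paper proves injectivity and surjectivity separately rather than packaging them as an explicit inverse $\Delta_Z$, but the mathematical content is identical: both arguments hinge on Lemma~\ref{lem:sameGB} to pin down the reduced basis from its leading term set, and both produce the preimage of a given $\overline{H}$ via a block elimination ordering on~$Z$ whose $Y$-restriction is the given ordering on $K[Y]$; your invocation of the Remark after Proposition~\ref{prop:charZ-sepTO} is just a slightly more packaged way of saying what the paper spells out directly in its surjectivity construction.
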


\begin{proof} First we prove injectivity. Let~$\sigma$ and~$\tau$ be
$Z$-separating term orderings for~$I$, write $G=\{g_1,\dots,g_r\}$
for the reduced $\sigma$-Gr\"obner basis of~$I$, and write
$H=\{h_1,\dots,h_{r'}\}$ for the reduced $\tau$-Gr\"obner basis of~$I$.
Here we may assume that $r\ge s$ and $r'\ge s$ and
$\LT_\sigma(g_i) = \LT_\tau(h_i) = z_i$ for $i=1,\dots,s$.
Suppose that the map $\Gamma_Z$ satisfies
\begin{align*}
\Gamma_Z(\overline{G}) &\;=\; \{\, (\LT_\sigma(g_{s+1}), g_{s+1}),\; \dots,\; 
(\LT_\sigma(g_r),g_r) \,\}\\ 
&\;=\; \{\, \LT_\tau(h_{s+1}), h_{s+1}),\; \dots,\;
(\LT_\tau(h_{r'}),h_{r'}) \,\} \;=\; \Gamma_Z(\overline{H})
\end{align*}
Then we see that $r=r'$ and $\LT_\sigma(g_i) = \LT_\tau(h_i)$
for $i=s+1,\dots,r$, after possibly renumbering $h_{s+1},\dots,h_r$
suitably. By the lemma, we get $G=H$, and hence $\overline{G}=\overline{H}$.

Next we prove surjectivity. Let~$\tau$ be a term ordering on $K[Y]$, 
and let~$H = \{h_1,\dots,h_t\}$ be the reduced $\tau$-Gr\"obner basis of $I\cap K[Y]$.
Using the theory of Gr\"obner fans (cf.~\cite{MR}), we may assume that
$\tau = {\rm ord}(W)$ with a matrix $W\in \Mat_{n-s}(\mathbb{Z})$.

By assumption, the set $\GFan_Z(I)$ is not empty. 
So, let $F=(f_1,\dots,f_s)$ be a coherently $Z$-separating tuple of polynomials in~$I$.
Now we construct a matrix $V\in \Mat_n(\mathbb{Z})$ which defines a
term ordering $\sigma = {\rm ord}(V)$ in~$\mathbb{T}^n$ as follows.
First we put the matrix of size $s\times s$ defining the ordering 
${\tt DegRevLex}$ on~$K[Z]$ into the columns of~$V$ corresponding to~$Z$ 
(see~\cite{KR1}, Definition 1.4.7 and Proposition 1.4.12)
and zeroes into the other columns. This fills the first~$s$ rows of~$V$.
In the lower $n-s$ rows, we put~$W$ into the columns of~$V$ corresponding to~$Y$
and zeroes into the columns corresponding to~$Z$.

The resulting term ordering~$\sigma$ satisfies $\LT_\sigma(f_i)= z_i$
for $i=1,\dots,s$ and $\LT_\sigma(h_j) = \LT_\tau(h_j)$ for $j=1,\dots,t$.
Therefore the reduced $\sigma$-Gr\"obner basis of~$I$ is of the form
$G=\{ z_1 - g_1,\dots,z_s - g_s,\, h_1,\dots,h_t\}$ with $g_1,\dots,g_s \in K[Y]$
and we get $\Gamma_Z(\overline{G}) = \overline{H}$, as desired.
\end{proof}

The proof of this theorem yields the following observation.

\begin{corollary}
In the setting of the theorem,
let $\sigma$ be a $Z$-separating term ordering for~$I$,
and let~$G$ be the reduced $\sigma$-Gr\"obner basis of~$I$.
Then there exists a $Z$-elimination term ordering~$\tau$ such 
that~$G$ is also a reduced $\tau$-Gr\"obner basis of~$I$.
\end{corollary}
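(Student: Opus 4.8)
The plan is to extract a $Z$-elimination term ordering directly from the matrix construction performed in the surjectivity half of the theorem's proof. First I would observe that, by hypothesis, $\sigma$ is a $Z$-separating term ordering for $I$, so by Proposition~\ref{prop:charZ-sepTO}.d the reduced $\sigma$-Gr\"obner basis $G$ has the shape $\{z_1-h_1,\dots,z_s-h_s,\,g_1,\dots,g_t\}$, where $(z_1-h_1,\dots,z_s-h_s)$ is a coherently $Z$-separating tuple $F$ in $I$ and $\{g_1,\dots,g_t\}$ is the reduced $\sigma_Y$-Gr\"obner basis of $I\cap K[Y]$. Set $\tau_0 = \sigma_Y$, realized (via the theory of Gr\"obner fans) by a matrix $W\in\Mat_{n-s}(\ZZ)$.

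Next I would invoke verbatim the matrix recipe from the theorem: build $V\in\Mat_n(\ZZ)$ whose first $s$ rows carry the {\tt DegRevLex} matrix on $K[Z]$ in the $Z$-columns and zeroes elsewhere, and whose lower $n-s$ rows carry $W$ in the $Y$-columns and zeroes in the $Z$-columns. Let $\tau = {\rm ord}(V)$. Because the first $s$ rows detect any indeterminate of $Z$ before any power-product in $Y$ alone, $\tau$ is by construction an elimination ordering for $Z$. The argument in the theorem already shows $\LT_\tau(f_i)=z_i$ for $i=1,\dots,s$ and $\LT_\tau(g_j)=\LT_{\tau_0}(g_j)=\LT_{\sigma_Y}(g_j)=\LT_\sigma(g_j)$ for $j=1,\dots,t$; hence $\Gamma_Z(\overline{G_\tau})=\overline{H}$ where $H=\{g_1,\dots,g_t\}$. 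But $\overline{H}=\Gamma_Z(\overline{G})$ as well, so by the injectivity half of Theorem~\ref{thm:restrGFan} we conclude $\overline{G_\tau}=\overline{G}$, i.e.\ $G$ is also the reduced $\tau$-Gr\"obner basis of $I$.

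The only point that needs a little care — and the place I'd expect a referee to look — is the verification that the constructed $\tau$ really is a $Z$-elimination ordering, which amounts to checking that in $V$ the $Z$-block in the top $s$ rows is a positive grading on $K[Z]$ (true for {\tt DegRevLex}) while the $Y$-entries in those rows vanish, so that a nontrivial term divisible by some $z_i$ always beats any pure $Y$-term; everything else is a direct citation of the theorem's surjectivity construction plus its injectivity statement. Alternatively, one can bypass the matrix entirely: starting from the shape of $G$ given by Proposition~\ref{prop:charZ-sepTO}.d, any elimination ordering $\tau$ for $Z$ whose restriction to $K[Y]$ equals $\sigma_Y$ will have $\LT_\tau(z_i-h_i)=z_i$ and $\LT_\tau(g_j)=\LT_\sigma(g_j)$, hence reduced $\tau$-Gr\"obner basis equal to $G$ by Lemma~\ref{lem:sameGB}; such a $\tau$ exists (e.g.\ {\tt Elim}$(Z)$ refined by $\sigma_Y$), and this is the cleaner write-up.
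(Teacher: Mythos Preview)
Your proposal is correct and follows essentially the same route as the paper: both construct~$\tau$ via the block matrix from the surjectivity half of Theorem~\ref{thm:restrGFan} (with \texttt{DegRevLex} on the $Z$-block and $\sigma_Y$ on the $Y$-block), observe that this is a $Z$-elimination ordering, and then conclude $G_\tau=G$ by the injectivity of~$\Gamma_Z$. Your write-up is in fact more careful than the paper's (which contains a typo, writing the lower block as $(r-s)\times(r-s)$ rather than $(n-s)\times(n-s)$), and your alternative ending via Lemma~\ref{lem:sameGB} is a clean variant of the same idea.
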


\begin{proof}
Let $G=\{g_1,\dots,g_r\}$ and $\Gamma_Z(\overline{G}) =
\{ (\LT_\sigma(g_{s+1}), \, g_{s+1}), \dots, (\LT_\sigma(g_r),\, g_r)\}$.
As in the proof of the theorem, we now construct a term ordering~$\tau$
of the form $\tau = {\rm ord}(V)$ where~$V$ is a block matrix
consisting of a block of size $s\times s$ representing {\tt DegRevLex}
and a block of size $(r-s) \times (r-s)$ representing $\sigma_Y$.
Then~$\tau$ is an elimination ordering for~$Z$ and the reduced $\tau$-Gr\"obner
basis~$H$ of~$I$ satisfies $\Gamma_Z(\overline{H}) = \Gamma_Z(\overline{G})$. 
This yields $H=G$ by the theorem.
\end{proof}

The theorem can simplify the search for an optimal re-embedding
considerably. To demonstrate how this works, we consider the setting 
of Example~4.3 in~\cite{KLR2}.

\begin{example}\label{ex:bestemb2}
Let $P = \mathbb{Q}[x, y, z, u, v]$, and let $I = \langle f_1, f_2, f_3, f_4\rangle $,  
where $f_1 = x^2 +x -z +v$, $f_2 = z^2 -u^2 -u$, $f_3 = u^2 -y +u -v$, and 
$f_4 = x^2 +u^2 -u$.
Recall that using \cocoa\ we may calculate the Gr\"obner fan of~$I$. It 
comprises 462 marked reduced Gr\"obner bases. Moreover, if we choose $Z' = (y, u, v)$, 
then an optimal re-embedding of~$I$ is given by the $\QQ$-algebra isomorphism
$\Phi_{Z'}:\; P/I \longrightarrow \QQ[x, z] / \langle x^4 +2x^2z^2 +z^4 +2x^2 -2z^2\rangle $.
The computation of the Gr\"obner fan of~$I$ is quite demanding, 
so let us see if we can get the same result using a better strategy 
which follows from the theorem.

If we let  $Z=(y, v)$ and use the technique explained in 
Section~\ref{sec:Finding Z-Separating Tuples}, we find that $(f_3, f_1)$ is 
$Z$-separating and every term ordering $\sigma$ 
represented by a matrix whose first row is $(1,  4,  1,  1,  3)$ 
yields $\{y -x +z -2u,\; v -u^2 +x -z +u \}$ as the reduced $\sigma$-Gr\"obner 
basis of $\langle f_1, f_3\rangle $. 
Here we have $K[Y] = \QQ[x, z, u]$ and the bijective map 
$\Gamma_Z:\; \GFan_Z(I) \cong \GFan (I\cap K[Y])$.
The computation of $\GFan(I\cap K[Y])$, which takes almost no time, 
returns five elements. They are 
$$
\begin{array}{lll}
\overline{H}_1 &=& \{ (z^2,\ z^2 -u^2 -u),  (x^2,\ x^2 +u^2 -u)\} \cr
\overline{H}_2 &=& \{ (u^2,\  u^2 -z^2 +u),  (x^2, \ x^2 +z^2 -2u) \}  \cr
\overline{H}_3 &=& \{ (u, \ u -\tfrac{1}{2}x^2 -\tfrac{1}{2}z^2),  
                      (x^4,\  x^4 +2x^2z^2 +z^4 +2x^2 -2z^2)\}  \cr
\overline{H}_4 &=& \{ (u^2,\ u^2 +x^2 -u),  (z^2,\  z^2 +u^2 -2u) \} \cr
\overline{H}_5 &=& \{ (u,\  u -\tfrac{1}{2}z^2 -\tfrac{1}{2}x^2),  
                      (z^4,\ z^4 +2x^2z^2 +x^4 -2z^2 +2x^2)\}
\end{array}
$$
Let us consider $H_3 = \{u -\tfrac{1}{2}x^2 -\tfrac{1}{2}z^2,\  
x^4 +2x^2z^2 +z^4 +2x^2 -2z^2\}$ and let us find $\Gamma_Z^{-1}(\overline{H}_3)$. 
Any term ordering $\tau$ on $\QQ[x, z, u]$ represented by a matrix whose first row is $(2,1,5)$
yields $H_3$ as the reduced $\tau$-Gr\"obner basis of $I\cap \QQ[x, z, u]$.
According to the proof of the surjectivity in  Theorem~\ref{thm:restrGFan}, we consider
the term ordering $\sigma$ on $P$ represented by the matrix
$
\left(\begin{array}{llrrr}
0 & 1  &   0 &   0 &   1 \cr
0 & 0  &   0 &   0 &  -1 \cr
2 & 0  &   1 &   5 &   0 \cr
0 & 0  &   0 &  -1 &   0 \cr
0 & 0  &  -1 &   0 &   0
\end{array}
\right)
$
and get the following reduced $\sigma$-Gr\"obner basis of~$I$:
$$
G_3=\{
y -x^2 -x -z^2 +z,\  v +x^2 +x -z,\   u -\tfrac{1}{2}x^2 -\tfrac{1}{2}z^2, \  
 x^4 +2x^2z^2  +2x^2 +z^4 -2z^2\}
$$
Notice that $\Gamma_Z(\overline{G}_3) = \overline{H}_3$. 
The $Z$-separating reduced Gr\"obner basis~$G_3$ yields a
re-embedding $\Phi:\; P/I \cong \QQ[x,z,u]/ 
\langle u -\tfrac{1}{2}x^2 -\tfrac{1}{2}z^2,\; x^4 +2x^2z^2 +z^4 +2x^2 -2z^2 \rangle $ 
given by $\Phi(\bar{x}) = \bar{x}$, $\Phi(\bar{z}) = \bar{z}$, $\Phi(\bar{u}) = \bar{u}$, 
$\Phi(\bar{y}) = \bar{x}^2 +\bar{x} +\bar{z}^2 -\bar{z}$, 
and $\Phi(\bar{v}) = -\bar{x}^2 -\bar{x} +\bar{z}$.

With similar computations we get 
$\Gamma_Z(\overline{G}_1) = \overline{H}_1$, $\Gamma_Z(\overline{G}_2) = \overline{H}_2$,
$\Gamma_Z(\overline{G}_3) = \overline{H}_3$, $\Gamma_Z(\overline{G}_4) = \overline{H}_4$,
and $\Gamma_Z(\overline{G}_5) = \overline{H}_5$
where 
\begin{align*}
G_1 = \{ & v +x -z -u^2 +u,\;  y -x +z -2u,\; z^2 -u^2 -u,  \;  x^2 +u^2 -u  \}\\
G_2 = \{ & v +x -z^2 +2u -z,\;  y -x -2u +z,\; u^2 -z^2 +u, \;  x^2 +z^2 -2u \}\\
G_3 = \{ & y -x^2 -x -z^2 +z,\;  v +x^2 +x -z,\; u -\tfrac{1}{2}x^2 -\tfrac{1}{2}z^2,\\  
         & x^4 +2x^2z^2 +2x^2 +z^4  -2z^2  \}\\
G_4 = \{ &v +x^2 -z +x,\;  y +z -2u -x,\;  u^2 +x^2 -u,\;  z^2 +x^2 -2u \}\\
G_5 = \{ &v +x^2 -z +x,\;  y -z^2 -x^2 +z -x,\\
         &u -\tfrac{1}{2}z^2 -\tfrac{1}{2}x^2,\;   z^4 +2x^2z^2 +x^4 -2z^2 +2x^2 \}
\end{align*}
are the five reduced Gr\"obner bases corresponding to the elements of $\GFan_Z(I)$.

Next we note that among the five marked Gr\"obner bases of $\GFan(I\cap \QQ[x, z, u])$ 
there are two which can be used to enlarge~$Z$. They are~$H_3$ and~$H_5$.
Using~$H_3$, for instance, we get $Z' = (y, u, v)$,  
$F'= (f_3, \ u -\tfrac{1}{2}x^2 -\tfrac{1}{2}z^2,\  f_1)$
and the $\QQ$-algebra isomorphism $\Phi':\; P/I \longrightarrow \QQ[x, z] / 
\langle x^4 +2x^2z^2 +z^4 +2x^2 -2z^2\rangle $.
This is exactly the isomorphism we found using the computation of the huge 
$\GFan(I)$  at the beginning of this example.
\end{example}

\bigskip\bigbreak
%
%

\section{Computing Some Good Re-Embeddings}
\label{sec:Computing}

As before, we let $P=K[x_1,\dots,x_n]$ be a polynomial ring over a field~$K$, let
$\M = \langle x_1,\dots,x_n\rangle$, and let $I\subseteq \M$ be an ideal of~$P$.
In view of the results of the preceding sections, the task to find
a good re-embedding of~$I$ can be solved if we find a large tuple~$Z$
of distinct indeterminates such that there exists a $Z$-separating
term ordering for~$I$. Unfortunately, this task does not appear to have
a systematic and uniform solution. In this section we collect a number
of heuristics and approaches which we found useful for constructing
good re-embeddings. In the final part of the section we describe
a criterium which allows us to detect re-embeddings which show that $P/I$
is isomorphic to a polynomial ring, i.e., which show that $\Spec(P/I)$
is isomorphic to an affine space.

Once a candidate tuple $Z=(z_1,\dots,z_s)$ and a tuple $F=(f_1,\dots,f_s)$
have been found such that $f_i\in I$ is $z_i$-separating for $i=1,\dots,s$,
the question whether~$I$ is $Z$-separating can be decided efficiently
using the methods of Section~\ref{sec:Finding Z-Separating Tuples}.
To find such a candidate tuple, the following strategy is sometimes useful.

\begin{remark}\label{rem:choosZstrategy}
Let $I=\langle g_1,\dots,g_r\rangle \subseteq \M$ be an ideal in~$P$.
\begin{enumerate}
\item[(1)] Recall that the {\bf linear part} of~$I$
can be computed via 
$$
\Lin_\M(I) \;=\; \langle (g_1)_1, \dots, \langle (g_r)_1 \rangle_K
$$
where $(g_i)_1$ is the homogeneous component of degree one of~$g_i$ for $i=1,\dots,r$
(cf.~\cite{KLR2}, Proposition~1.6).

\item[(2)] Bring the coefficient matrix of $(g_1)_1,\dots,(g_r)_1$ into
reduced row echelon form. Let $g_1',\dots,g_r'$ be the polynomials
obtained by applying the same reduction steps to $g_1,\dots,g_r$.

\item[(3)] For each $i\in \{1,\dots,r\}$ such that $(g'_i)_1 \ne 0$, choose one
indeterminate in the support of this linear form. Let $Z'=(z_1,\dots,z_s)$ be the tuple
of indeterminates gathered in this way, where $z_j \in \Supp((f_j)_1)$ and
$f_j = g'_{i_j}$ for $j=1,\dots,s$ and $i_j \in \{1,\dots,r\}$.

\item[(4)] Use the subtuple~$Z$ of~$Z'$ consisting of all~$z_j$ such that
$f_j$ is $z_j$-separating and check whether the ideal generated by these
polynomials is $Z$-separating.

\end{enumerate}
\end{remark}

There are several potential drawbacks related to this strategy.
Let us illustrate them with examples.

\begin{example}\label{ex:badchoice}
Let $P= \QQ[x_1, x_2, x_3, x_4, x_5]$, and let $I=\langle g_1,g_2,g_3\rangle$,
where $g_1 = x_1 - x_2^2$, $g_2 = x_1 +x_3^2 +x_4^2 + x_5^2$, and $g_3 = x_2 -x_1^3 + x_3^4$.
Let us apply the above steps.
\begin{enumerate}
\item[(1)] We have $\Lin_\M(I) = \langle x_1,\, x_1,\, x_2 \rangle_K$.

\item[(2)] Here we get $g_1,\, g'_2, g_3$, where $g'_2 = g_2 - g_1$.

\item[(3)] This yields $Z'=(x_1,x_2)$ and $f_1=g_1$ as well as $f_3=g_3$.

\item[(4)] We have $Z=Z'$, but unfortunately the ideal $\langle f_1, f_3\rangle =
\langle x_1-x_2^2,\; x_2 - x_1^3 + x_3^4 \rangle$ is not $Z$-separating,
since we would need a term ordering~$\sigma$ such that
$x_1 >_\sigma x_2^2 >_\sigma (x_1^3)^2 = x_1^6$.
\end{enumerate}

Instead, if we perform the Gau\ss-Jordan reduction in Step~(2) differently,
we may calculate $g'_1,g_2,g_3$ where $g'_1 = g_1 - g_2$.
Then Step~(3) yields $Z'=(x_1,x_2)$ and $(f_1,f_2) = (g_2,g_3)$.
Thus we have $Z=Z'$ in Step~(4), and $\langle f_1,f_2\rangle$
turns out to be $Z$-separating.
Altogether, we see that the success of the above strategy may depend
in a subtle way on how the reduction steps are performed in the Gau\ss -Jordan
algorithm.
\end{example}

In the following example we encounter a different problem.

\begin{example}
Let $P = \mathbb{Q}[x,y,z]$ and $I = \langle g_1,\, g_2 \rangle $, where
$g_1 = y + z - x^2$ and $g_2 = x - y^2$.
\begin{enumerate}
\item[(a)] Following the above strategy, we have $g'_1 = g_1$ and $g'_2 = g_2$.
If we now choose $Z = Z' = (y,x)$ then the ideal $\langle g'_1,\, g'_2\rangle =I$
is not $Z$-separating, since we would need $y >_\sigma x^2$ and $x>_\sigma y^2$.

\item[(b)] However, if we choose $Z = Z' = (z,x)$ then~$I$ is $Z$-separating
and $(f_1,\, f_2) = (z-(y^4-y),\,  x - y^2)$ is a coherently $Z$-separated
system of generators of~$I$.
\end{enumerate}
By choosing $Z=(z,x)$, we get the good re-embedding $P/I \cong \mathbb{Q}[y]$.
How do we arrive at this choice? One way would be to try each indeterminate
in $\Supp(g'_i)_1$ in Step~(3) above. Unfortunately, this may result
in too many cases for non-trivial examples. 
\end{example}

To overcome the difficulty presented by the preceding example,
one method is to {\it shuffle} the indeterminates which show up in the supports 
of the linear forms. We found that, after a few shuffles, the correct choice was detected
quite frequently. The next example is a case in point.

\begin{example}\label{ex:drawbacks}
Let $P = \QQ[x,y,z,w]$, and let $I = \langle g_1,\, g_2,\, g_3\rangle $, where 
$g_1 = x+z+w -y^2$, $g_2 = y +w -x^2$, and $g_3 = y -xw$. 
\begin{enumerate}
\item[(a)] Following Remark~\ref{rem:choosZstrategy}, we find
that $g'_1 = x +z +x^2 -y^2 -xw$, $g_2' = w -x^2 +xw$, and $g_3' = y -xw$
have linear form which correspond to a reduced row echelon form of their coefficient
matrix. This suggests to use $Z=(x,w,y)$ or $Z=(z,w,y)$, but none of these
choices work.

\item[(b)] After shuffling the indeterminates in the supports of $(g'_1)_1$, 
$(g'_2)_1$, and $(g'_3)_1$, we find that $Z=(z,y)$ leads to the $Z$-separating ideal 
$\langle g'_1,\, g'_3\rangle$ and the coherently $Z$-separating tuple 
$(z - (x^2w^2 -x^2 +xw -x),\, y - xw)$. This allows us to construct the re-embedding 
$P/I \cong \mathbb{Q}[x,w] / \langle x^2 -xw -w \rangle$.

\end{enumerate}
\end{example}

In the following example we combine several techniques to find an optimal
re-embedding of an ideal in~$P$. Recall that a re-embedding 
$\Phi_Z:\; P/I \cong \widehat{P} / (I\cap \widehat{P})$ was called
{\bf optimal} in~\cite{KLR2}, Definition~3.3, if any other
re-embedding $P/I \cong P'/I'$ satisfies $\dim(P')\ge \dim(\widehat{P})$.

\begin{example}\label{ex:singular}
Let $P = \QQ[x,y,z]$, and let $I = \langle f_1, f_2, f_3\rangle $, 
where $f_1=x^2 -y^2 -x +2y -z +1$, $f_2=y^3 -3y^2 +3y -z +1$, and 
$f_3=y^2 +x -2y +z -2$. Our goal is to find an optimal re-embedding
of~$I$. 

Note that the ideal~$I$ is not contained in~$\M$. Therefore, as explained
in the introduction, we have to perform a linear change of coordinates
such that the origin is in $\mathcal{Z}(I)$, and the most useful
way to do this is to move a singularity of $\mathcal{Z}(I)$ to the origin.
In the current example, the ring $P/I$ has a unique singular 
point at $p=(1,1,2)$. Therefore we perform the linear change of coordinates
$x \mapsto x+1$, $y\mapsto y+1$, $z \mapsto z+2$ and get the new
ideal $I' = \langle f'_1, f'_2, f'_3\rangle$, where $f'_1=x^2 -y^2 +x -z$,
$f'_2= y^3 -z$, and $f'_3 = y^2 +x +z$.

The linear part of this ideal is $\Lin_\M(I') = \langle x-z, z, x+z \rangle$.
As discussed above, for choosing good candidates for the tuple~$Z$,
we have to bring these linear parts into reduced row echelon form.
Here it turns out that one of the three linear forms reduces to zero,
and that it is best to reduce the first one to zero via $f'_1 \mapsto 
f'_1 - f'_3 - 2f'_2$.
Then we reduce~$f'_3$ to $f'_3 +f'_2$ and get
$I' = \langle g_1, g_2, g_3 \rangle$ where
$g_1 = f'_2 = y^3 -z$, $g_2 = f'_3 +f'_2 = y^3 + y^2 + x$, and
$g_3 = f'_1 -f'_3 - 2 f'_2 = -2y^3 -2y^2 + x^2$.

Here $(g_1,g_2)$ is coherently $Z$-separating with respect to
$Z=(z,x)$. This leads to the re-embedding $\Phi_Z:\; 
P/I' \cong \QQ[y] / \langle y^6 + 2y^5 + y^4 -2y^3 -2y^2 \rangle$.
Using Corollary 4.2 of~\cite{KLR2}, we deduce that this is an 
optimal re-embedding.

Going back to the original coordinate system, we see that the map given by 
$z \mapsto y^3 -3y^2 +3y +1,\  x \mapsto -y^3 +2y^2 -y +1$ provides a 
$\QQ$-algebra isomorphism  
$P/I\cong K[y] / \langle y^6 -4y^5 +6y^4 -6y^3 +5y^2 -2y\rangle$
which is an optimal re-embedding.
\end{example}

As promised, the final part of the section provides a positive result
related to the search of good re-embeddings.
According to Remark~\ref{rem:indepchoice},
if there exists a tuple of~$s$ distinct indeterminates $Z=(z_1,\dots,z_s)$ 
in $X = (x_1,\dots,x_n)$ such that~$I$ is $Z$-separating, 
then every $Z$-separating term ordering~$\sigma$ for~$I$ produces the same isomorphism 
$\Phi:\; P/I \;\longrightarrow\; \widehat{P} / (I \cap \widehat{P})$, as
described in Definition~\ref{def:sepembed}.
Additionally, recall that Corollary 4.2 of~\cite{KLR2} says that, if $\dim_K(\Lin_\M(I)) =s$,
then the isomorphism~$\Phi$ is an optimal embedding. Unfortunately, 
the following example shows that we cannot reverse this implication.

\begin{example}\label{ex:optbiggerthans}
Let $P =\QQ[x, y]$, and let $I = \langle x - y^2,\, y - x\rangle$. In this case
we have $\M =\langle x, y \rangle$ and $\Lin_\M(I) = \langle x, y \rangle$.
For the tuple $Z=(x)$, we have $s=1$ and the inequality 
$\dim_{\QQ}(\Lin_\M(I)) =2 > s=1$. Nevertheless, the isomorphism
$\Phi: P/I \To \mathbb{Q}[y]/\langle y-y^2 \rangle =: R$ is an optimal re-embedding,
since $P/I$ cannot be isomorphic to~$\QQ$, as~$R$ is not an integral domain.
\end{example}

However, the next result says that we can sometimes find good re-embeddings
which show that our original scheme is isomorphic to an affine space.

\begin{proposition}{\bf (Affine Space Criterion)}\label{prop:AffineSpace}\\
Let $Z = (z_1,\dots,z_s)$ be a tuple of distinct indeterminate in $X=(x_1,\dots,x_n)$,
and let $I \subseteq\M$ be an ideal such that the following conditions are
satisfied.
\begin{enumerate}
\item[(1)] The ideal~$I$ is $Z$-separating.

\item[(2)] The ring $P_{\M} / I_{\M}$ is a regular local ring.

\item[(3)] The $K$-vector space $\Lin_{\M}(I)$ is $s$-dimensional.
\end{enumerate}
Then the following claims hold.
\begin{enumerate}
\item[(a)] We have $s = n - \dim(P_{\M}/I_{\M})$.

\item[(b)]  We have $I \cap \widehat{P} = \{0\}$. 
\end{enumerate}
In particular, the map $\Phi:\; P/I \longrightarrow \widehat{P} = K[X\setminus Z]$ 
is an isomorphism with a polynomial ring and the scheme $\Spec(P/I)$ is isomorphic 
to an $(n-s)$-dimensional affine space over~$K$.
\end{proposition}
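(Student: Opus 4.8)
The plan is to first prove (a) and then deduce (b), and finally to assemble the ``in particular'' statement from these and from the $Z$-separating re-embedding machinery of Section~\ref{sec:Z-Separating Re-Embeddings}. I would start by fixing a $Z$-separating term ordering~$\sigma$ for~$I$, whose existence is guaranteed by hypothesis~(1). By Proposition~\ref{prop:charZ-sepTO}.d, the reduced $\sigma$-Gr\"obner basis of~$I$ has the form $\{z_1-h_1,\dots,z_s-h_s,\,g_1,\dots,g_t\}$ with $h_i\in K[Y]$ and $\{g_1,\dots,g_t\}$ the reduced $\sigma_Y$-Gr\"obner basis of $I\cap\widehat{P}$, and the $Z$-separating re-embedding gives $P/I\cong \widehat{P}/(I\cap\widehat{P})$.

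For part~(a), the key observation is that hypothesis~(3) together with the $Z$-separating property pins down $\dim_K(\Lin_{\M}(I))$: each polynomial $z_i-h_i$ contributes the linear form $z_i$ (possibly minus the linear part of $h_i$, which lies in $K[Y]$), and one must check that the linear parts of $g_1,\dots,g_t$ contribute nothing new. Here I would argue that if some $g_j$ had a nonzero linear part, then since its leading term is a term in $K[Y]$ not divisible by any $z_i$, interreduction against the $z_i-h_i$ cannot eliminate it, so $\Lin_{\M}(I)$ would have dimension strictly larger than~$s$, contradicting~(3); hence $\dim_K(\Lin_{\M}(I))=s$ forces all $g_j$ to lie in $\M^2$. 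Then, localizing at~$\M$, the images of $z_1-h_1,\dots,z_s-h_s$ span the cotangent space $\M_\M/\M_\M^2$ modulo $I_\M$-contributions, and regularity of $P_\M/I_\M$ (hypothesis~(2)) says $\dim_K(\M_\M/(\M_\M^2+I_\M)) = \dim(P_\M/I_\M)$; comparing with the fact that $\Lin_\M(I)$ is exactly $s$-dimensional and is carried by the $z_i-h_i$, one gets $n - s = \dim(P_\M/I_\M)$. This is essentially the content of \cite{KLR2}, Theorem~4.1 / Corollary~4.2, applied carefully, so I would invoke those where possible.

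For part~(b), the idea is a dimension count combined with the re-embedding isomorphism. From (a) we have $n-s=\dim(P_\M/I_\M)$, which equals $\dim(P/I)$ at the point~$\M$; but the re-embedding gives $P/I\cong \widehat{P}/(I\cap\widehat{P})$ with $\widehat{P}=K[X\setminus Z]$ a polynomial ring in exactly $n-s$ indeterminates. Since $I\cap\widehat{P}\subseteq\M\cap\widehat{P}$, the quotient $\widehat{P}/(I\cap\widehat{P})$ has a local ring at the origin of dimension $(n-s)-\operatorname{ht}(I\cap\widehat{P})$; combined with regularity (which transfers across the isomorphism~$\Phi$) and the fact that this dimension is $n-s$, we get $\operatorname{ht}(I\cap\widehat{P})=0$, and since $\widehat{P}$ is a domain this forces $I\cap\widehat{P}=\{0\}$. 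One has to be slightly careful that $\widehat{P}/(I\cap\widehat{P})$ being regular \emph{locally at one point} plus having a zero-height ideal in a domain really does force the ideal to vanish globally --- here the cleanest route is: $I\cap\widehat{P}$ has height zero in the domain~$\widehat{P}$, hence is contained in a minimal prime, hence equals $(0)$.

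The ``in particular'' assertion is then immediate: $\Phi$ is an isomorphism $P/I\cong\widehat{P}/\{0\}=\widehat{P}=K[X\setminus Z]$, a polynomial ring in $n-s$ indeterminates, so $\Spec(P/I)\cong \AA_K^{n-s}$. \emph{The main obstacle} I anticipate is part~(a): making rigorous the passage between the global linear-algebra statement ``$\dim_K\Lin_\M(I)=s$'' and the local cotangent-space computation at~$\M$, in particular verifying that interreduction in the Gr\"obner basis cannot secretly produce linear forms among the $g_j$ and that the $s$ forms $z_i - (h_i)_1$ are $K$-linearly independent (which is clear since they have distinct leading indeterminates $z_i$). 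Once that bookkeeping is pinned down, (b) and the final conclusion follow by standard dimension theory.
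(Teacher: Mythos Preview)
Your proposal is correct and follows essentially the same route as the paper: part~(a) via the cotangent-space identity (regularity gives $\dim_K(\M_\M/(\M_\M^2+I_\M)) = \dim(P_\M/I_\M)$, and \cite{KLR2}, Proposition~1.5.b, turns hypothesis~(3) into $n - s = \dim_K(\M_\M/(\M_\M^2+I_\M))$), and part~(b) via the dimension count through the $Z$-separating re-embedding $P/I\cong\widehat{P}/(I\cap\widehat{P})$ with $\dim\widehat{P}=n-s$.

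One remark: what you flag as the ``main obstacle'' --- the Gr\"obner basis analysis showing that each $g_j$ lies in $\M^2$ --- is entirely unnecessary for~(a). Hypothesis~(3) hands you $\dim_K\Lin_\M(I)=s$ directly, and the cotangent-space formula from \cite{KLR2} applies without any reference to \emph{how} $\Lin_\M(I)$ is spanned by particular generators. The paper's proof of~(a) is accordingly two lines long; you can drop the whole interreduction discussion and the linear-independence bookkeeping for the $z_i-(h_i)_1$.
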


\begin{proof}
To prove (a), we let $d = \dim(P_{\M}/I_{\M})$. 
Since $P_{\M}/I_{\M}$ is a regular local ring, the cotangent space at its
maximal ideal has dimension~$d$. Hence~\cite{KLR2}, Proposition~1.5.b 
and hypothesis~(3) imply $s = n - d$.

Now we show~(b). Using~(a), we deduce that 
the cardinality of $X \setminus Z$ is $n-(n-d)=d$.
We know that the map $\Phi$ is an isomorphism between $P/I$ and
$\widehat{P}/(I \cap \widehat{P})$, and that $\widehat{P}$ 
is a polynomial ring having~$d$ indeterminates.
On the other hand, the dimension of the localization $\dim(P_\M / I_\M)$ of~$P/I$
is invariant under this isomorphism, and hence we have 
$\dim(\widehat{P} / (I\cap \widehat{P})) \ge d$. 
It follows that $I\cap \widehat{P}$ has to be the zero ideal.
\end{proof}

\bigskip\bigbreak
%
%

\section{$T$-Restricted Gr\"obner Fans}
\label{sec:T-RestrGFan}

The definition of the $Z$-restricted Gr\"obner fan of an ideal can be generalized. 
To introduce the idea, let us look at a simple example.

\begin{example}\label{ex:TGFan}
Let $P = \mathbb{Q}[x, y, z, w]$, let $F=(f_1, f_2)$, where $f_1= x^2 -yz -xw$ 
and $f_2= y^3 -w^3 -x$, and let $I = \langle f_1,\,f_2\rangle $.
Then the Gr\"obner fan of~$I$ consists of 24 marked reduced Gr\"obner bases.
The only way to choose~$Z$ to get a non-trivial restricted Gr\"obner fan
is $Z=(x)$, and in this case we have a 3-element set $\GFan_Z(I) \cong 
\GFan(I \cap \mathbb{Q}[x,y,w])$. Its three elements are
\begin{align*}
\{(x,\ x -y^3 +w^3), ( y^6, \ y^6  -2y^3w^3  -y^3 w  +w^6  +w^4  -yz) \}\\
\{(x,\ x +w^3 -y^3), (yz,\  yz -w^6 +2y^3w^3 -w^4 -y^6 +y^3w  )\}\\
\{(x,\ x +w^3 -y^3), ( w^6, \ w^6  -2y^3w^3 +w^4 +y^6 -y^3w -yz)\}
\end{align*}

However, if we use $T = (yz, x)$ and require that a marked reduced
$\sigma$-Gr\"obner basis of~$I$ contains polynomials $g_1,g_2$ such that
$\LT_\sigma(g_1)=yz$ and $\LT_\sigma(g_2)=x$, then there exists only
one such Gr\"obner basis, namely
$\{ (yz,\;  yz -w^6 +2y^3w^3 -w^4 -y^6 +y^3w),\, (x,\; x +w^3 -y^3)\}$.
For instance, it can be obtained  using any term ordering represented by
a matrix whose first row is $(4,1,8,1)$.
\end{example}

This example shows that it makes sense to use~$T$ to define a restricted 
Gr\"obner fan of~$I$. The general definition is given as follows.

\begin{definition}\label{def:T-sep}
Let $I\subseteq \M$ be an ideal in~$P$, and let
$T=(t_1,...,t_s)$ be a tuple of distinct terms in~$\mathbb{T}^n\setminus \{1\}$.
\begin{enumerate}
\item[(a)] A tuple of polynomials $F=(f_1,\dots,f_s)$ with 
$f_i \in\M\setminus \{0\}$ for $i=1,\dots,s$
is called {\bf coherently $T$-separating} if the following conditions
are satisfied.
\begin{itemize}
\item[(1)] There exists a term ordering~$\sigma$ such that we have 
$\LT_\sigma(f_i)=t_i$ for $i=1,\dots,s$.

\item[(2)] For $i,j \in \{1,\dots,s\}$ such that $i\ne j$, no term
in the support of~$f_j$ is divisible by~$t_i$.
\end{itemize}
In this case $\sigma$ is called a {\bf $T$-separating term ordering} for~$F$.

\item[(b)] A term ordering~$\sigma$ is called a {\bf $T$-separating
term ordering} for~$I$ if there
exists a coherently $T$-separating tuple $F=(f_1,\dots,f_s)$ 
with $f_1,\dots,f_s\in I\setminus \{0\}$ such that~$\sigma$ is a $T$-separating
term ordering for~$F$.

\item[(c)] The set of all marked reduced Gr\"obner bases of~$I$ of the form
$$
\overline{G} \;=\; \{\, (\LT_\sigma(g_1),\, g_1),\; \dots, \;
(\LT_\sigma(g_r),\, g_r) \,\}
$$ 
such that for $i=1,\dots,s$ there exist $j_i\in \{1,\dots,r\}$
with $\LT_\sigma(g_{j_i})=t_i$ is called the {\bf $T$-restricted Gr\"obner fan} of~$I$ 
and is denoted by $\GFan_T(I)$.

\end{enumerate}
\end{definition}

Notice that, for the case of a tuple of distinct indeterminates~$T$,
this definition agrees with Definitions~\ref{def:Z-sep}.c, \ref{def:Z-sepTO}, 
and~\ref{def:Z-GFan} if we keep Proposition~\ref{prop:charZ-sepTO}
in mind. Moreover, for a marked reduced Gr\"obner basis~$\overline{G}$
in $\GFan_T(I)$ as in part~(c) of the definition, the tuple
$(g_{j_1},\dots,g_{j_s})$ is coherently $T$-separating.

The following example illustrates the $T$-restricted Gr\"obner fan of an ideal.

\begin{example}\label{ex:T-GFan}
Let $P = \mathbb{Q}[x,y,z,w]$, and let $I=\langle f_1,f_2,f_3\rangle$,
where $f_1= x^2 +yz +xw$, $f_2= y^2-w^2-x$, and
$f_3=w^2-y+w$. Then the Gr\"obner fan of~$I$ consists of 162 marked reduced Gr\"obner 
bases. 
\begin{enumerate}
\item[(a)] For $Z=(x,w)$, the $Z$-restricted Gr\"obner fan of~$I$ is empty,
but if we let $T=(x,w^2)$ then $\GFan_T(I)$ consists of 9 marked
reduced Gr\"obner bases.

\item[(b)] Using $T=(x,yz)$, we get that $\GFan_T(I)$ contains only 
one element, namely $\overline{G} = \{(x,\, g_1),  (w^2,\, g_2 ), (yz,\, g_3) \}$,
where $g_1 = x -y^2 -w +y$, $g_2 = w^2 +w -y$, and $g_3 = 
yz +y^4 +3y^2w -2y^3 -3yw +y^2 -2w +2y$.
This Gr\"obner basis~$G$ is also the reduced Gr\"obner basis of~$I$
with respect to the elimination term ordering ${\tt Elim}(x,z)$.
Therefore we get $I\cap \mathbb{Q}[y,w] = \langle w^2 + w-y\rangle$. 
Notice that this does not yield a re-embedding of~$I$, 
since there is no natural $\QQ$-algebra isomorphism
between $P/I$ and $\mathbb{Q}[y,w]/ \langle w^2 + w-y\rangle$, as we do not know
how to map~$\bar{z}$. 
\end{enumerate}
\end{example}

This example shows that the elements of $\GFan_T(I)$ do not yield re-embeddings
of~$I$ in the sense of Section~\ref{sec:Z-Separating Re-Embeddings}. However,
they allow us to perform certain eliminations and correspond to the following 
type of isomorphisms.

\begin{proposition}
Let $I\subseteq \M$ be an ideal in~$P$, and let
$T=(t_1,...,t_s)$ be a tuple of distinct terms in~$\mathbb{T}^n$ such that
$\GFan_T(I) \ne \emptyset$. Let $Z$ be the set of all indeterminates
in $\{x_1,\dots,x_n\}$ dividing one of the terms in~$T$, and let~$Y$ 
be the complement of~$Z$ in $\{x_1,\dots,x_n\}$.
\begin{enumerate}
\item[(a)] The set of all terms $u\in\mathbb{T}(Z)$ such that 
$u \notin \langle t_1,\dots,t_s\rangle$ forms an order ideal
in~$\mathbb{T}(Z)$ which we denote by~$\mathcal{O}_T$.

\item[(b)] Let $\overline{G} = \{ (t_1, t_1 - h_1), \dots, (t_s, t_s - h_s),
(\LT_\sigma(g_{s+1}),\, g_{s+1}), \dots, (\LT_\sigma(g_r),\, g_r)\}$
be an element of $\GFan_T(I)$, where $h_1,\dots,h_s, g_{s+1},\dots,g_r\in P$. 
Consider the free $K[Y]$-module $M= \bigoplus_{t\in \OO_T}\, t\cdot K[Y]$
and form the set 
$$
H \;=\; \{ u\, g_i \mid u\in \mathbb{T}(Z),\; i\in \{s+1,\dots,r\},\;
\Supp(u\, g_i) \subseteq M \}
$$
Then~$H$ is a (possibly infinite) Gr\"obner basis of the $K[Y]$-submodule 
$I \cap M$ of~$M$ with respect to the
restriction $\sigma_M$ of~$\sigma$ to~$M$.

\item[(c)] In the setting of~(b), let $F=(t_1-h_1, \dots, t_s-h_s)$.
Then the normal form map
$\NF_F:\; P \longrightarrow M$ induces an isomorphism of $K[Y]$-modules
$\Phi_T:\; P/I \cong M / (I \cap M)$
which is called the {\bf $T$-separating module re-embedding} of~$I$.
\end{enumerate}
\end{proposition}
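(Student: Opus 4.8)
The plan is to treat the three parts in sequence, using the $T$-separating Gr\"obner basis $\overline{G}$ as the main computational object throughout. For part~(a), I would argue that $\mathcal{O}_T = \{u\in\mathbb{T}(Z)\mid u\notin\langle t_1,\dots,t_s\rangle\}$ is an order ideal directly from the definition: if $u\in\mathcal{O}_T$ and $u'$ divides $u$ with $u'\in\mathbb{T}(Z)$, then $u'\notin\langle t_1,\dots,t_s\rangle$ as well, since otherwise some $t_i\mid u'\mid u$ would contradict $u\notin\langle t_1,\dots,t_s\rangle$. This is the routine part and should only take a couple of lines.

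For part~(b), the key observation is that $\overline{G}$ being $T$-separating means the tail polynomials $h_1,\dots,h_s$ involve no term divisible by any $t_i$ (by the coherence condition in Definition~\ref{def:T-sep}.a(2)), so $\NF_\sigma$-reduction of any polynomial modulo $\{t_1-h_1,\dots,t_s-h_s\}$ drives the $Z$-degree down and terminates with a representative whose support lies in $M = \bigoplus_{t\in\mathcal{O}_T}t\cdot K[Y]$. First I would verify that $M$ is exactly the $K$-span of the residue classes of the order-ideal terms, i.e.\ that $\{t\cdot y^\beta\mid t\in\mathcal{O}_T,\ y^\beta\in\mathbb{T}(Y)\}$ maps to a $K$-basis of $P/\langle t_1-h_1,\dots,t_s-h_s\rangle$, which follows from Macaulay's basis theorem applied to the coherently $T$-separating tuple $(t_1-h_1,\dots,t_s-h_s)$ whose leading term ideal is $\langle t_1,\dots,t_s\rangle$ in $K[Z]\otimes_K K[Y]$-flavoured terms. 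Then, for the Gr\"obner basis claim, I would show that given $f\in I\cap M$, its $\sigma$-leading term is divisible by $\LT_\sigma(g_i)$ for some $i\in\{s+1,\dots,r\}$ (since $G$ is a Gr\"obner basis of $I$ and no term of $f$ is divisible by any $t_j$), so $\LT_\sigma(f)=u\cdot\LT_\sigma(g_i)$ with $u\in\mathbb{T}(Z)$, and one checks that this particular $u\,g_i$ has support inside $M$ after reduction — hence lies in $H$ — so that $H$ generates $\LT_{\sigma_M}(I\cap M)$ as a $K[Y]$-submodule of $M$. The set $H$ may be infinite because there is no \emph{a priori} bound on the $Z$-degree of the multipliers $u$ for which $\Supp(u\,g_i)\subseteq M$; I do not expect to need finiteness.

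Part~(c) is where I would spend the most care. The map $\NF_F\colon P\to M$ is $K[Y]$-linear and, by the division algorithm with respect to the coherently $T$-separating tuple $F=(t_1-h_1,\dots,t_s-h_s)$, it is surjective onto $M$ with kernel $I_F = \langle t_1-h_1,\dots,t_s-h_s\rangle$; this identifies $M$ with $P/I_F$ as a $K[Y]$-module. Under this identification $I\cap M$ corresponds to the image of $I$ in $P/I_F$, since $I\supseteq I_F$ (because each $t_i-h_i\in I$, being a reduced Gr\"obner basis element of $I$ that has leading term $t_i$). Therefore $M/(I\cap M)\cong (P/I_F)/(I/I_F)\cong P/I$, and chasing the isomorphisms shows it is the $K[Y]$-linear map induced by $\NF_F$, proving $\Phi_T$ is a well-defined $K[Y]$-module isomorphism. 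The main obstacle I anticipate is being careful that $\NF_F$ is genuinely well-defined and $K[Y]$-linear \emph{before} one knows $F$ is part of a Gr\"obner basis of $I_F$ — but this is exactly guaranteed by condition~(2) of coherent $T$-separation, which makes $\{t_1-h_1,\dots,t_s-h_s\}$ an interreduced, hence reduced, $\sigma$-Gr\"obner basis of $I_F$, so normal forms are unique and the quotient $M\cong P/I_F$ is canonical.
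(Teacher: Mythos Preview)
Your treatment of~(a) matches the paper's exactly. For~(b) you are on the same track as the paper --- reduce $f\in I\cap M$ to zero using only $g_{s+1},\dots,g_r$ --- but two points need tightening. First, the multiplier~$u$ with $\LT_\sigma(f)=u\cdot\LT_\sigma(g_i)$ lies in~$\mathbb{T}^n$, not in~$\mathbb{T}(Z)$; the paper handles this by writing $u=u'u''$ with $u'\in\mathbb{T}(Z)$ and $u''\in\mathbb{T}(Y)$, so that the reduction step subtracts a $K[Y]$-multiple of $u'g_i$. Second, the phrase ``support inside~$M$ after reduction'' is not what the definition of~$H$ asks for: membership in~$H$ requires $\Supp(u'g_i)\subseteq M$ outright. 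The paper sidesteps a direct verification of this by arguing that the entire reduction $f\TTo{G}0$ stays inside~$M$ step by step (since each intermediate polynomial has support in~$M$, its leading term is never divisible by a~$t_j$), so every reductor used is automatically of the required form. Your single-step leading-term argument does not quite establish this without that inductive observation.

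For~(c) you take a genuinely different and cleaner route than the paper. The paper shows directly that $\epsilon\circ\NF_F\colon P\to M/(I\cap M)$ is surjective and then argues that its kernel is~$I$ by reducing to the characterisation $\NF_F(f)\in I\cap M\iff f\in I$. Your approach --- identify $M\cong P/I_F$ via $\NF_F$ (using that $F$ is the reduced $\sigma$-Gr\"obner basis of~$I_F$), observe $I_F\subseteq I$ so that $I\cap M$ corresponds to $I/I_F$, and invoke the third isomorphism theorem --- is more structural and avoids the somewhat terse kernel computation. Both arguments need the $K[Y]$-linearity of~$\NF_F$, which you correctly attribute to the fact that no $y_j$ divides any~$t_i$; this is exactly what the paper records as well.
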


\begin{proof}
To prove~(a), it suffices to note that the complement of the
monoideal generates by $\{t_1,\dots,t_s\}$ in~$\mathbb{T}(Z)$
is an order ideal, i.e., for $u\in \OO_T$ and $v\in \mathbb{T}(Z)$
such that $v\mid u$ we have $v\notin \langle t_1,\dots,t_s\rangle$
and hence $v\in \OO_T$.

To show~(b), we first note that~$\sigma_M$ is clearly a $K[Y]$-module
term ordering on~$M$. Since the polynomials $g_{s+1},\dots,g_r$
are fully reduced against the polynomials $t_1 -h_1,\dots,t_s -h_s$, 
no term in their supports is divisible by a term in~$T$. 
Hence they are contained in~$I\cap M$.
It remains to show that their leading terms generate the leading term 
module of~$I \cap M$. For $f\in I\cap M$, we have $\NF_G(f)=0$.
However, since the support of~$f$ is contained in~$M$, only
$g_{s+1},\dots,g_r$ can be involved in the reduction steps $f \TTo{G} 0$.
We may assume that in each reduction step a polynomial of the
form $u g_i\in M$ with $u\in \mathbb{T}^n$ and $i\in \{s+1,\dots,r\}$
is subtracted. We write $u = u'\, u''$ with $u'\in \mathbb{T}(Z)$
and $u''\in\mathbb{T}(Y)$. Then the reduction step subtracts a $K[Y]$-multiple
of $u' g_i \in M$. Hence~$f$ can be reduced to zero in the $K[Y]$-module
$I\cap M$ via the elements of~$H$, and the claim is proved.

Finally, we note that the map $\NF_F$ in part~(c) is well-defined, because
$\OO_T \cdot \mathbb{T}(Y)$ are the terms in the complement of the
monoideal generated by $t_1,\dots,t_s$. The map $\NF_F$ is 
$K[Y]$-linear, as the indeterminates in~$Y$ do not divide any term in~$T$.
Letting $\epsilon:\; M\longrightarrow M/ (I\cap M)$ be
the canonical surjection, it is clear that the composed
map $\epsilon \circ \NF_F :\; P \longrightarrow M 
\longrightarrow M/(I\cap M)$ is surjective. Hence it suffices to show
that the kernel of $\epsilon\circ \NF_F$ is~$I$. This follows from 
the fact that a polynomial $f\in P$ satisfies $\NF_F(f)\in I$ if and
only if~$f$ reduces to zero via $g_{s+1},\dots,g_r$, and this is equivalent
to $f\in I\cap M$.
\end{proof}

Notice that the $K[Y]$-module~$M$ is not necessarily finitely generated.
To get a full analogue to Theorem~\ref{thm:restrGFan} in the
$T$-separating case, one would have to develop a theory of
Gr\"obner fans for modules and then examine which Gr\"obner bases
of~$I\cap M$ result from the restriction given in part~(b) of the proposition.
We leave this task to the interested readers and end this paper
by applying the proposition to an easy example and to the 
setting of Example~\ref{ex:T-GFan}.b.

\begin{example}\label{ex:easyTGFan}
Let $P = \QQ[x,y]$, let $T =(x^3)$, and let $G =\{ g_1, g_2\}$,
where $g_1 = x^3 -x$ and $g_2 = xy$.
Let us look at the various statements of the proposition.
\begin{enumerate}
\item[(a)] We have $X=(x,y)$ and $Z=(x)$ and $Y=(y)$.
The order ideal $\OO_T \subseteq \mathbb{T}(Z)$ is given by 
$\OO_T = \{1,\, x,\, x^2\}$.

\item[(b)] The module~$M$ is the free $\QQ[y]$-module
$M \;=\; \QQ[y] \oplus x\, \QQ[y] \oplus x^2\, \QQ[y]$.\\
The set $H = \{ g_2,\, xg_2 \} =\{xy, \, x^2y\}$ is a $\sigma_M$-Gr\"obner 
basis of the $\QQ[y]$-module $I \cap M$, and 
hence $I\cap M = \QQ[y] \cdot xy \oplus \QQ[y] \cdot x^2y $.

\item[(c)] The $T$-separating module re-embedding of~$I$ is given
by the $\QQ[y]$-iso\-mor\-phism 
$P/I \cong  (\QQ[y] \oplus x\, \QQ[y] \oplus x^2\, \QQ[y])/
( \QQ[y] \cdot xy \oplus \QQ[y] \cdot x^2y )$.
\end{enumerate}
\end{example}

\begin{example}\label{ex:T-GFan-continued}
Let $P = \QQ[x,y,z,w]$, let $T=(x,yz)$, and let $G= \{ g_1,g_2,g_3 \}$,
where $g_1 = x -y^2 -w +y$, $g_2 = w^2 +w -y$, and $g_3 = 
yz +y^4 +3y^2w -2y^3 -3yw +y^2 -2w +2y$. Here the term ordering~$\sigma$
satisfies $\LT_\sigma(g_1)=x$, $\LT_\sigma(g_2)=w^2$, and $\LT_\sigma(g_3)=yz$.
Thus we have $Z=(x,y,z)$ and $Y=(w)$.
Let us look at the various statements of the proposition.
\begin{enumerate}
\item[(a)] The order ideal $\OO_T$ is given by $\OO_T = \{1,\, y,\,
z,\, y^2,\, z^2,\, \dots \}$.

\item[(b)] The module~$M$ is the free $\QQ[w]$-module
$$
M \;=\;  \QQ[w] \oplus y\, \QQ[w] \oplus z\, \QQ[w] \oplus y^2\, \QQ[w]
\oplus z^2\, \QQ[w] \oplus \cdots
$$
The set $H = \{ g_2,\, yg_2,\, y^2g_2, \dots\}$ is a $\sigma_M$-Gr\"obner 
basis of the $\QQ[w]$-module $I \cap M$, and hence $I\cap M = \QQ[y,w] \cdot g_2$.

\item[(c)] The $T$-separating module re-embedding of~$I$ is given
by the $\QQ[w]$-iso\-mor\-phism $P/I \cong M / \QQ[y,w]\cdot g_2 \cong
\QQ[y,w]/ \langle g_2\rangle \oplus \bigoplus_{i\ge 1}\QQ[w]\, z^i$.

\end{enumerate}
\end{example}

\bigskip\bigbreak
%
%

\end{document}